\newcommand{\fr}{\mathfrak}
 \newtheorem{lemma} {Lemma} [section]
\newtheorem{theorem}[lemma]{Theorem} 
\newtheorem{remark}[lemma] {Remark} 
\newtheorem{prop} [lemma]{Proposition}  
\newtheorem{definition}[lemma] {Definition} 
\newtheorem{example}[lemma] {Example}
\newtheorem{assum}[lemma]{Assumption}
\newtheorem{no}[lemma]{Notation}
\numberwithin{equation}{section}
\begin{document}

\title[Geodesics as deformations of one-parameter subgroups]{Geodesics as deformations of one-parameter subgroups in homogeneous manifolds}
\author{Nikolaos Panagiotis Souris} \thanks{The author is supported by the EPSRC grant EP/R008205/1.} 
\address{University of Reading, Department of Mathematics and Statistics, Reading RG6 6AX, UK}
\email{n.souris@reading.ac.uk}

\maketitle
\medskip

\begin{abstract}

We solve explicitly the geodesic equation for a wide class of (pseudo)-Riemannian homogeneous manifolds $(G/H,\mu)$, including those with $G$ compact, as well as non-compact semisimple Lie groups, under a simple algebraic condition for the metric $\mu$.  We prove that these manifolds are geodesically complete and their geodesics are orbits in $G/H$ of a product of $N$ one-parameter subgroups of $G$, where $N\in \mathbb N$.  This is intimately related to the fact that the metrics $\mu$ can be regarded as $N$-parameter deformations of a Riemannian metric in $G/H$ with special symmetries.  We show that there exists a wealth of metrics having the aforementioned type of geodesics; those metrics are constructed from any Lie subgroup series of the form $H\subset H_1\subset \cdots \subset H_{N-1}\subset G$.  
\medskip

\noindent
{\it 2010 Mathematical Subject Classification.} Primary 53C22; Secondary 53C30; 53C20; 53C50.

\medskip

\noindent

{\it Key words.} Geodesics; homogeneous manifold; one-parameter subgroup.
 
 \end{abstract}

\section{Introduction}

 An open problem in geometry is the construction of pseudo-Riemannian metrics whose geodesics can be explicitly described (\cite{Bur-Mat}).  Owing to the complexity of the geodesic equation $\nabla_{\dot{\gamma}}{\dot{\gamma}}=0$, metrics with explicitly known geodesics are scarce even in manifolds of the simplest nature, namely homogeneous manifolds.  For the homogeneous case, the most common metrics with known geodesics are those whose geodesics are orbits of one-parameter groups of isometries.  Such examples are the standard Riemannian metrics in symmetric spaces (\cite{Kob-No} p. 231) as well as the metrics induced from bi-invariant metrics on Lie groups.  On the other hand, given a general pseudo-Riemannian metric $\mu$ in a homogeneous manifold $G/H$, there has been rather limited insight on what are the possible forms of the geodesics or which geometric aspects of $(G/H,\mu)$ determine those forms.\\
In this paper we determine explicitly the geodesics for a large class of homogeneous manifolds by using the following general approach:  We firstly fix a Riemannian metric $\mu_0$ in $G/H$ whose geodesics are orbits of one-parameter subgroups $\phi_X(t)$ of $G$.  Then we view any other invariant metric $\mu$ in $G/H$ as an $N$-parameter deformation of $\mu_0$, where $N$ is the number of distinct eigenvalues of the unique operator $A_{\mu}:T_{eH}(G/H)\rightarrow T_{eH}(G/H)$ given by 

\begin{equation}\label{nstar}\mu( X,Y)=\mu_0(A_{\mu}X,Y), \quad X,Y \in T_{eH}(G/H).\end{equation} 

  Subsequently, we prove that, under an algebraic condition for the eigenspaces of $A_{\mu}$, the one-parameter subgroup geodesics of $\mu_0$ deform to orbits of a product of $N$ one-parameter subgroups, yielding the geodesics of the metric $\mu$.\\
In particular we choose $\mu_0$ to be \emph{naturally reductive} (see Definition \ref{defnat}) which is a well-studied class of metrics with additional symmetries (see \cite{Da-Nic}), \cite{Da-Zi}, \cite{Go}).  When $G$ is compact, an example of such a metric is the metric induced from a bi-invariant metric in $G$.  The central result of the paper can be summarised as follows.  

\begin{theorem}\label{main1} Let $G/H$ be a homogeneous manifold that admits a $G$-invariant Riemannian naturally reductive metric $\mu_0$.  Let $\mu$ be a $G$-invariant metric in $G/H$ and let $A_{\mu}$ be the corresponding endomorphism of the tangent space $T_{eH}(G/H)$, given by Equation (\ref{nstar}).  Assume that the eigenspaces $\fr{m}_1,\dots,\fr{m}_N$ of $A_{\mu}$ satisfy the relations

\begin{equation}\label{integg}[\fr{m}_i,\fr{m}_j]\subseteq \fr{m}_i\quad \makebox{if} \quad i<j.\end{equation}    

 Then for any geodesic $\gamma$, with $\gamma(0)=eH$, there exist $N$ vectors $X_1,\dots, X_N$ in the Lie algebra $\fr{g}$ of $G$ such that 

\begin{equation*}\label{GF}\gamma(t)=\phi_{X_1}(t)\cdots \phi_{X_N}(t)H, \quad \makebox{for any} \quad t\in \mathbb R,\end{equation*}

\noindent where $\phi_{X_i}$ denotes the one-parameter subgroup of $G$ generated by $X_i$, $i=1,\dots,N$.  In particular, $(G/H,\mu)$ is geodesically complete.
\end{theorem}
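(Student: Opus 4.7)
The strategy is to combine the algebraic consequences of condition (\ref{integg}) with natural reductivity of $\mu_0$ to extract a chain of Lie subalgebras, and then to solve the geodesic equation componentwise across the eigenspace decomposition. I would first establish a key structural fact: natural reductivity gives $\mu_0$-skew-symmetry of $\ad_X|_\fr{m}$, and combined with (\ref{integg}) this forces $\mu_0([X,Y]_\fr{m}, Z) = -\mu_0(Y, [X,Z]_\fr{m}) = 0$ whenever $X,Y \in \fr{m}_i$ and $Z \in \fr{m}_j$ with $j < i$ (because then $[X,Z] \in \fr{m}_j \perp_{\mu_0} Y$). Hence $[\fr{m}_i, \fr{m}_i] \subseteq \fr{h} \oplus \fr{m}_i \oplus \fr{m}_{i+1} \oplus \cdots \oplus \fr{m}_N$, and therefore each $\fr{h}_j := \fr{h} \oplus \fr{m}_{N-j+1} \oplus \cdots \oplus \fr{m}_N$ is a Lie subalgebra, giving the chain $H = H_0 \subset H_1 \subset \cdots \subset H_{N-1} \subset H_N = G$ referenced in the abstract. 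A parallel check shows each $\fr{m}_l$ is $\Ad(H_{N-l})$-invariant.

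Next I would rewrite the geodesic equation in this decomposition. Using the Koszul formula the Levi-Civita connection of $\mu$ splits as $\nabla_X Y = \frac{1}{2}[X,Y]_\fr{m} + U(X,Y)$, with $U$ the $\mu$-symmetric form determined by $2\mu(U(X,Y),Z) = \mu(X,[Y,Z]_\fr{m}) + \mu(Y,[X,Z]_\fr{m})$. Writing $v(t) = (g(t)^{-1}\dot g(t))_\fr{m} = \sum_l v_l(t)$ with $v_l \in \fr{m}_l$, natural reductivity forces $U(v_l, v_l)=0$, while a direct computation using (\ref{integg}) and the structural fact above yields $U(v_i, v_j) = \frac{\lambda_i - \lambda_j}{2\lambda_i}[v_i, v_j] \in \fr{m}_i$ for $i<j$. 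Hence the geodesic equation $\dot v + U(v,v) = 0$ decouples across eigenspaces as
\begin{equation*}
\dot v_l = \sum_{k>l} \frac{\lambda_k - \lambda_l}{\lambda_l}\,[v_l, v_k], \qquad l = 1, \dots, N,
\end{equation*}
so in particular $v_N(t)$ is constant, and each lower $v_l(t)$ satisfies a linear non-autonomous ODE driven by $v_{l+1}(t), \dots, v_N(t)$.

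Finally I would propose the ansatz $\gamma(t) = \phi_{X_1}(t) \cdots \phi_{X_N}(t)H$ with $X_k \in \fr{h}_{N-k+1}$, and show by induction on $N$ that a suitable recursive choice of the $X_k$ produces a $v(t)$ matching the ODE solution. The base case $N=1$ is Kostant's theorem for the naturally reductive metric $\mu = \lambda_1 \mu_0$. For the inductive step, since $v_N$ is constant, choose $X_N \in \fr{h}_1$ so that $\phi_{X_N}(t)$ accounts for its contribution; the $\Ad(H_1)$-invariance of $\fr{m}_1 \oplus \cdots \oplus \fr{m}_{N-1}$ ensures that conjugating the remaining factors by $\phi_{X_N}(t)^{-1}$ leaves the reduced system for $v_1, \dots, v_{N-1}$ in the same form on $N-1$ eigenspaces, to which the inductive hypothesis applies. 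The hardest step will be checking this reduction cleanly — in particular, that the recursively extracted $X_{N-1}, \dots, X_1$ from the inductive hypothesis lie in the correct subalgebras $\fr{h}_{N-k+1}$ of the chain, and that the resulting product curve genuinely has velocity $v(t)$ as prescribed. Once the factorization is established, geodesic completeness is immediate: each $\phi_{X_k}$ is a one-parameter subgroup of $G$, defined on all of $\mathbb{R}$, hence so is their product and therefore $\gamma(t) \in G/H$.
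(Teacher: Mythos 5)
Your setup is sound and close in spirit to the paper's: the structural consequences you draw from natural reductivity together with (\ref{integg}) are correct (in particular the chain of subalgebras $\fr{h}_j=\fr{h}\oplus\fr{m}_{N-j+1}\oplus\cdots\oplus\fr{m}_N$, which is the converse of the paper's existence construction in Theorem \ref{main3}), and your triangular system
\begin{equation*}
\dot v_l=\sum_{k>l}\tfrac{\lambda_l-\lambda_k}{\lambda_l}\,[v_l,v_k],\qquad [v_l,v_k]\in\fr{m}_l,
\end{equation*}
is (up to your sign conventions, and provided you work with a horizontal lift so that $\omega_{\fr{h}}=0$ --- a point you should make explicit) exactly the Lax-type equation of Theorem \ref{main2} written componentwise. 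The gap is the inductive step, which is the entire content of the theorem and which, as proposed, does not go through for $N\geq 3$. Write $X_N=cY_N$ with $Y_N=\pi_N(\dot\gamma(0))$ and set $u_l(t)=\Ad(\phi_{X_N}(\pm t))v_l(t)$. Since $\ad(X_N)\fr{m}_l\subseteq\fr{m}_l$ and $\ad(X_N)Y_N=0$, one computes
\begin{equation*}
\dot u_l=\Big(\tfrac{\lambda_l-\lambda_N}{\lambda_l}\mp c\Big)[u_l,Y_N]+\sum_{N>k>l}\tfrac{\lambda_l-\lambda_k}{\lambda_l}\,[u_l,u_k].
\end{equation*}
The coefficient of the residual coupling to the constant top component depends on $l$, so no single choice of $c$ (i.e.\ of the one-parameter subgroup $\phi_{X_N}$) removes it for all $l$ simultaneously; the conjugated system is \emph{not} the $(N-1)$-eigenspace system of the same form except when $N=2$ or when $\lambda_1=\cdots=\lambda_{N-1}$. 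This is consistent with the true solution: in the paper's formulas (\ref{VE}) each $T_i^N(t)X_i$ is spread over $\fr{m}_i\oplus\cdots\oplus\fr{m}_N$, so the partial products are not geodesics of intermediate metrics and no clean peeling-off of one factor is available.

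The paper avoids induction and verifies the ansatz directly: it introduces the curves $T_i^j(t)=\Ad\big((\prod_{k=i+1}^j\phi_{X_k}(t))^{-1}\big)$, computes $\omega(t)=\sum_iT_i^N(t)X_i$ and its derivative (Lemma \ref{egal}, Proposition \ref{th3}), uses condition (\ref{integ}) to show $T_i^N(t)Y_i\in\fr{m}_i$ for all $t$ (Lemma \ref{claim2}), and then checks the resulting algebraic identity by a change of summation together with the telescoping identity $\sum_{i=1}^{j}c_i=1/\lambda_j$, where $c_1=1/\lambda_1$ and $c_i=(\lambda_{i-1}-\lambda_i)/(\lambda_{i-1}\lambda_i)$. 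Some global computation of this kind seems unavoidable; your completeness remark at the end is fine once the factorization is actually established.
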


The vectors $X_i$, $i=1,\dots N$, depend on $\dot{\gamma}(0)$ and on the eigenvalues of $A_{\mu}$;  they are described explicitly in the detailed statement of Theorem \ref{main1} in Section \ref{mainres}.\\  
Theorem \ref{main1} yields the geodesics for an extensive class of homogeneous manifolds.  Indeed, manifolds admitting a Riemannian naturally reductive metric $\mu_0$ include, among other examples, any manifold $G/H$ with $G$ compact, any non-compact and connected semisimple Lie group as well as any simply connected Heisenberg group (see Section \ref{natredmet}).  Subsequently, any such manifold admits a plethora, depending on the maximality of $H$ in $G$, of pseudo-Riemannian metrics $\mu$ satisfying Condition (\ref{integg}).  In particular, we show that $N$-parameter families of metrics $\mu$ satisfying Condition (\ref{integg}) can be constructed from any Lie subgroup series of the form $H\subset H_1\subset \cdots \subset H_{N-1}\subset G$ ( Theorem \ref{main3}).  If the subgroups $H_i$ are closed, these metrics can be regarded as $N$-parameter deformations of the naturally reductive metric $\mu_0$ along the manifolds $H_{i+1}/H_i$.  Two-parameter examples include \emph{Cheeger deformation metrics} induced by a homogeneous fibration $G/H\rightarrow G/K\rightarrow K/H$ (see Section \ref{appl}).  Furthermore, there exist numerous examples of spaces whose geodesics with respect to any $G$-invariant metric are given by Theorem \ref{main1} (see Section \ref{appl}).\\ 
The proof of Theorem \ref{main1} relies on two key ingredients.  The first is an algebraic formulation of the geodesic equation for $(G/H,\mu)$ that we obtain in Theorem \ref{main2}.  It generalises a Lax pair expression for the Euler equation of motion in compact Lie groups which was obtained by Arnold in \cite{Arn}.  We derive it by using the additional symmetries of the endomorphism $A_{\mu}$ that are induced from the naturally reductive metric $\mu_0$.  The second ingredient is a special family of curves $T_i^j:\mathbb R\rightarrow \operatorname{Aut}(\fr{g})$, $1\leq i\leq j\leq N$, that we introduce in Section \ref{nhomo}.  The technical properties of the family $T_i^j$ allow a more convenient handling of the aforementioned algebraic expression of the geodesic equation for the curves $\gamma(t)=\phi_{X_1}(t)\cdots \phi_{X_N}(t)H$.\\
For $N=1$, Theorem \ref{main1} recovers the fact that the geodesics with respect to any scalar multiple of a naturally reductive metric $\mu_0$ are orbits of single one-parameter subgroups, while for $N=2$ it recovers the main results in the papers \cite{Arv-So} and \cite{Do}.\\

\subsection{Structure of the paper}  The paper is structured as follows:\\
  In Section \ref{prel} we set the notation of the paper and we recall some preliminary facts about homogeneous manifolds.\\
In Section \ref{natredmet} we collect some basic facts about naturally reductive metrics.\\
  In Section \ref{mainres} we state the main results of the paper in the order that they will be proved:  Theorem \ref{main2} which states the algebraic formulation of the geodesic equation, Theorem \ref{main11} which is a more detailed statement of the central result \ref{main1}, and Theorem \ref{main3} which establishes the existence of metrics with geodesics of the form $\gamma(t)=\phi_{X_1}(t)\dots \phi_{X_N}H$ in manifolds admitting a Riemannian naturally reductive metric.\\
    In Section \ref{equiv} we study the geodesic equation $\nabla_{\dot{\gamma}}{\dot{\gamma}}=0$ for homogeneous spaces.  In Proposition \ref{main33} we obtain equivalent expressions for the equation.  By using Proposition \ref{main33} as well as the additional symmetries of naturally reductive metrics, we prove Theorem \ref{main2}.\\
      Section \ref{nhomo} serves as preparatory work for the proof of the central Theorem \ref{main11}.  In particular, we introduce a special family of curves $T_i^j:\mathbb R\rightarrow \operatorname{Aut}(\fr{g})$ and we derive several properties of this family in Lemma \ref{egal}.  By using the curves $T_i^j$, we express the geodesic equation of Theorem \ref{main2} explicitly for curves of the form $\gamma(t)=\phi_{X_1}(t)\cdots \phi_{X_N}(t)H$ (Proposition \ref{th3}).\\
      In Section \ref{proof} we use the aforementioned explicit expression in order to prove the central Theorem \ref{main11} of the paper.  The contribution of the structural condition (\ref{integg}) to the proof is highlighted in Lemma \ref{claim2}.\\
        In Section \ref{appl} we give a proof of the existence result, Theorem \ref{main3}, by using a characterisation of Kostant for naturally reductive metrics.  Within the proof, we describe the explicit construction of $N$-parameter metrics whose geodesics are orbits of a product of $N$ one-parameter subgroups.  We construct those metrics by considering Lie subgroup series of the form $H\subset H_1\subset \cdots \subset H_{N-1}\subset G$.  We also give examples of such metrics.  Finally, based on a result of Ziller, we give an interpretation of Theorem \ref{main11} for special cases of two-parameter metrics.

 \bigskip
\noindent\textit{Acknowledgments.}
The author is supported by the EPSRC grant EP/R008205/1.

  \section{Preliminaries and notation}\label{prel}
The main references for this section are the books \cite{Hel}, \cite{Kob-No} and \cite{On}.

\subsection{Adjoint representations and one-parameter subgroups of Lie groups}

 Let $G$ be a Lie group and let $\fr{g}$ be its Lie algebra.  We denote by $L_g,R_g:G\rightarrow G$ the left and right translations by $g\in G$ respectively.  Let $\operatorname{Ad}:G\rightarrow \operatorname{Aut}(\fr{g})$ be the adjoint representation of $G$ given by $\operatorname{Ad}(g)X=d(L_g\circ R_{g^-1})_eX$, for $g\in G$ and $X\in \fr{g}$, and let $\operatorname{ad}:\fr{g}\rightarrow \operatorname{End}(\fr{g})$ be the adjoint representation of $\fr{g}$ given by $\operatorname{ad}(X)Y=(d\operatorname{Ad})_e(X)Y=[X,Y]$, for $X,Y\in \fr{g}$.  Any $\operatorname{Ad}$-invariant object in $\fr{g}$ is also $\operatorname{ad}$-invariant and the converse is true if $G$ is connected.\\
  For $X\in \fr{g}$ we denote by $\phi_X:\mathbb R \rightarrow G$ the one-parameter subgroup of $G$ generated by $X$, that is $\phi_X(t)=\exp(tX)$.  We recall the identities
  
  \begin{eqnarray}\label{id0}\phi_X(t)^{-1}&=&\phi_X(-t),\quad \makebox{for any} \quad X\in \fr{g},\quad t\in \mathbb R,\\
\label{id1}\phi_X(t+s)&=&\phi_X(t)\phi_X(s),\quad \makebox{for any} \quad X\in \fr{g},\quad t,s\in \mathbb R \quad \makebox{and}\\
\label{id2}g\phi_X(t)g^{-1}&=&\phi_{\operatorname{Ad}(g)X}(t),\quad \makebox{for any} \quad g\in G,\quad X\in \fr{g},\quad t\in \mathbb R.\end{eqnarray}

\subsection{Homogeneous spaces and $G$-invariant metrics} 
Let $M$ be a homogeneous manifold.  For any Lie group $G$ acting transitively on $M$, the manifold $M$ is diffeomorphic to the \emph{homogeneous space} $G/H$, where $H$ is the isotropy subgroup of the point $eH$ called the \emph{origin} of $G/H$.

\begin{assum}\label{asum}  For the rest of the paper we assume that the action of $G$ on $G/H$ is effective.  Equivalently, any subgroup of $H$ that is normal in $G$ is trivial.
\end{assum}

 Assumption \ref{asum} is not restrictive;  for any subgroup $K$ of $H$ that is normal in $G$, we can consider the diffeomorphic space $(G/K)/(H/K)$ without losing crucial geometric information for $G/H$, as any $G$-invariant object in $G/H$ is also $G/K$-invariant and vice versa.\\
  Let $\tau_g:G/H\rightarrow G/H$ be \emph{the left translation in $G/H$} by $g\in G$, given by $\tau_g(g^{\prime}H)=(gg^{\prime})H$, for $g^{\prime}H \in G/H$.  Any left translation $\tau_g$ is a diffeomorphism of $G/H$ and its inverse is $\tau_{g^{-1}}$.  A pseudo-Riemannian metric $\mu$ in $G/H$ is called \emph{$G$-invariant} if the left translations $\tau_g$ are isometries of $(G/H,\mu)$.\\  
Let $\pi:G\rightarrow G/H$ be the natural projection.  Then $\pi$ is a submersion.  Moreover, 

\begin{equation}\label{interc}\tau_g\circ \pi=\pi\circ L_g,\quad \makebox{for any} \quad g\in G.\end{equation}

\subsection{Reductive homogeneous spaces}\label{klgries}
  We denote by $\fr{g}$, $\fr{h}$ the Lie algebras of $G$ and $H$ respectively.  The space $G/H$ is called \emph{reductive} if the Lie algebra $\fr{g}$ admits a direct sum decomposition
 
 \begin{equation}\label{decom}\fr{g}=\fr{h}\oplus \fr{m},\end{equation}
 
 \noindent such that $\operatorname{Ad}(H)\fr{m}\subseteq \fr{m}$.  In that case, $\fr{m}$ is identified with the tangent space $T_{eH}(G/H)$ via the differential of $\pi$, that is $T_{eH}(G/H)=d\pi_e(\fr{g})=\fr{m}$.  Any space $G/H$ that admits a $G$-invariant Riemannian metric is reductive.\\
In any reductive space $G/H$, the $G$-invariant pseudo-Riemannian metrics $\mu$ are in bijection with the $\operatorname{Ad}(H)$-invariant, symmetric and non-degenerate bilinear forms $\langle \ ,\ \rangle$ on $\fr{m}$.  Moreover, if $\mu$ is Riemannian then $\langle \ ,\ \rangle$ is positive definite.  If  $\langle \ ,\ \rangle$ is an $\operatorname{Ad}(H)$-invariant form on $\fr{m}$, then any operator $\operatorname{ad}(X)$, $X\in \fr{h}$, is skew-symmetric with respect to $\langle \ ,\ \rangle$.

\subsection{The metric endomorphism}
We fix a $G$-invariant Riemannian metric $\mu_0$ in $G/H$.  Let $\mu$ be a $G$-invariant pseudo-Riemannian metric in $G/H$ and consider the $\operatorname{Ad}(H)$-invariant forms $\langle \ ,\ \rangle_0$ and $\langle \ ,\ \rangle$ on $\fr{m}$ corresponding to the metrics $\mu_0$ and $\mu$ respectively.  There exists a unique endomorphism $A_{\mu}:\fr{m}\rightarrow \fr{m}$ such that 
  
  \begin{equation}\label{metrop}\langle X,Y\rangle =\langle A_{\mu}X,Y\rangle_0,\quad \makebox{for any} \quad X,Y\in \fr{m}.\end{equation} 

We call $A_{\mu}$ the \emph{metric endomorphism of $\mu$}.  Clearly, $A_{\mu_0}=\operatorname{Id}$.  The metric endomorphism is \emph{symmetric with respect to $\langle \ ,\ \rangle_0$}, \emph{it has non-zero eigenvalues} because $\langle \ ,\ \rangle$ is non-degenerate, and it is \emph{$\left.\operatorname{Ad}(H)\right|_{\fr{m}}$-equivariant}, that is $A_{\mu}\circ \left.\operatorname{Ad}(h)\right|_{\fr{m}}=\left.\operatorname{Ad}(h)\right|_{\fr{m}}\circ A_{\mu}$, for any $h\in H$.  The last property is induced from the $\operatorname{Ad}(H)$-invariance of $\langle \ ,\ \rangle$.  Conversely, any endomorphism with the aforementioned properties corresponds to a unique $G$-invariant pseudo-Riemannian metric in $G/H$.  Moreover, the metric $\mu$ is Riemannian if and only if the eigenvalues of $A_{\mu}$ are positive.  Since $A_{\mu}$ is diagonalisable, the tangent space $\fr{m}$ admits a $\langle \ ,\ \rangle_0$-orthogonal decomposition

\begin{equation*}\fr{m}=\fr{m}_1\oplus \cdots \oplus \fr{m}_N,\end{equation*}

\noindent into the eigenspaces $\fr{m}_i$, $i=1,\dots,N$, of $A_{\mu}$.  As a result, the spaces $\fr{m}_i$ are $\operatorname{Ad}(H)$-invariant.  We refer to \cite{So} for the explicit form of $A_{\mu}$ when $G$ is compact. \\
Let $\lambda_1,\dots,\lambda_N$ be the eigenvalues of $A_{\mu}$ corresponding to the eigenspaces $\fr{m}_1,\dots,\fr{m}_N$ respectively.  We will use the notation

\begin{equation}\label{lerdak}A_{\mu}=\lambda_1\left.\operatorname{Id}\right|_{\fr{m}_1}+\cdots +\lambda_N\left.\operatorname{Id}\right|_{\fr{m}_N},\end{equation}

 \noindent for $A_{\mu}$.  By letting $\lambda_1,\dots,\lambda_N$ vary over $\mathbb R^*$, the endomorphisms (\ref{lerdak}) define an $N$-parameter family of metrics $\mu=\mu(\lambda_1,\dots,\lambda_N)$ which can be regarded as an $N$-parameter family of deformations of the metric $\mu_0$.

\section{Naturally reductive metrics}\label{natredmet}

In this section we recall the definition and several basic facts regarding naturally reductive metrics. 

\begin{definition}\label{defnat}\emph{A $G$-invariant metric $\mu_0$ in $G/H$ is called} naturally reductive \emph{(equivalently the space $(G/H,\mu_0)$ is called naturally reductive) if $G/H$ admits a decomposition (\ref{decom}) such that the corresponding $\operatorname{Ad}(H)$-invariant form $\langle \ ,\ \rangle_0$ in $\fr{m}$ satisfies the identity}

\begin{equation}\label{natred}\langle [X,Y]_{\fr{m}},Z\rangle_0+\langle Y,[X,Z]_{\fr{m}}\rangle_0=0,\end{equation}

\noindent \emph{for any $X,Y,Z\in \fr{m}$.  We will also say that \emph{$\mu_0$ is naturally reductive with respect to the decomposition (\ref{decom}).}}
\end{definition}

Examples of naturally reductive spaces include \emph{irreducible symmetric spaces}, as well as \emph{normal homogeneous spaces}, that is spaces $G/H$ endowed with a metric induced from a bi-invariant metric in $G$.  We recall the following.
  
  \begin{prop}\label{oneiln} \emph{(\cite{On} p. 313)} Let $M$ be a homogeneous manifold endowed with a $G$-invariant naturally reductive metric $\mu_0$.  Then the geodesics of $(M,\mu_0)$ are orbits in $M$ of one-parameter subgroups of $G$.  
  \end{prop}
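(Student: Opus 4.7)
The plan is to show that every curve of the form $\gamma(t)=\phi_X(t)H$ with $X\in\fr{m}$ is a geodesic of $\mu_0$, where $\fr{g}=\fr{h}\oplus\fr{m}$ is the reductive decomposition with respect to which $\mu_0$ satisfies identity (\ref{natred}). Since $d\pi_e$ identifies $\fr{m}$ with $T_{eH}(G/H)$ and $\mu_0$ is $G$-invariant, any initial condition for the geodesic equation can be transported by a translation $\tau_g$ to the origin, so it suffices to handle curves through $eH$ with arbitrary initial velocity in $\fr{m}$.

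First I would observe that $\gamma$ is an integral curve of the Killing vector field $\tilde X$ on $G/H$ generated by the one-parameter group of isometries $\tau_{\phi_X(t)}$: indeed, $\dot\gamma(s)=\frac{d}{dt}\big|_{t=0}\tau_{\phi_X(t)}(\gamma(s))=\tilde X(\gamma(s))$, so $(\nabla_{\dot\gamma}\dot\gamma)(s)=(\nabla_{\tilde X}\tilde X)_{\gamma(s)}$. Because $\tilde X$ is invariant under its own flow and $\tau_{\phi_X(s)}$ is an isometry sending $eH$ to $\gamma(s)$, we obtain $(\nabla_{\tilde X}\tilde X)_{\gamma(s)}=(d\tau_{\phi_X(s)})_{eH}(\nabla_{\tilde X}\tilde X)_{eH}$. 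Thus it suffices to verify $(\nabla_{\tilde X}\tilde X)_{eH}=0$.

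At this point I would apply the Koszul formula to the fundamental vector fields $\tilde X,\tilde Y,\tilde Z$ associated to $X,Y,Z\in\fr{m}$ and re-express it in terms of the Lie algebra data. The standard Nomizu-type computation, using the $\operatorname{Ad}(H)$-invariance of $\langle\,,\,\rangle_0$ to separate the $\fr{h}$- and $\fr{m}$-components of brackets, yields an expression of the form
\begin{equation*}
2\langle(\nabla_{\tilde X}\tilde Y)_{eH},Z\rangle_0 = -\langle [X,Y]_{\fr{m}},Z\rangle_0 - \langle [Z,X]_{\fr{m}},Y\rangle_0 - \langle [Z,Y]_{\fr{m}},X\rangle_0.
\end{equation*}
Setting $Y=X$, the first term vanishes while the remaining two combine into $-2\langle [Z,X]_{\fr{m}},X\rangle_0$. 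The naturally reductive identity (\ref{natred}) applied with $Y=X$ forces $\langle X,[X,Z]_{\fr{m}}\rangle_0=0$, hence $\langle [Z,X]_{\fr{m}},X\rangle_0=0$. Consequently $(\nabla_{\tilde X}\tilde X)_{eH}=0$, and by the reduction above $\gamma$ is a geodesic on its full domain.

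The main obstacle I anticipate is the careful bookkeeping in deriving the Koszul-type formula at the origin: the bracket of fundamental vector fields on $G/H$ differs by a sign from the corresponding Lie algebra bracket, and splitting $[\,,\,]$ into its $\fr{h}$- and $\fr{m}$-components requires the skew-symmetry of $\operatorname{ad}(W)|_{\fr{m}}$ for $W\in\fr{h}$ (equivalently the $\operatorname{Ad}(H)$-invariance of $\langle\,,\,\rangle_0$). Once that formula is in place, the naturally reductive condition (\ref{natred}) delivers the conclusion almost for free.
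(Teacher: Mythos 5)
Your argument is correct, and it is essentially the classical proof of this fact; note that the paper itself gives no proof of Proposition \ref{oneiln}, simply citing \cite{On} p.~313, so there is no internal argument to compare against line by line. Your reduction to the origin via the isometries $\tau_g$, the identification of $t\mapsto\phi_X(t)H$ with the integral curve of the Killing field $X^*$, the use of the invariance of $\nabla$ under the isometries $\tau_{\phi_X(s)}$ to reduce everything to $(\nabla_{X^*}X^*)_{eH}=0$, and the final appeal to (\ref{natred}) are all sound. Two small remarks. First, the last two terms of your displayed Koszul-type formula should carry plus signs, i.e. $2\langle(\nabla_{\tilde X}\tilde Y)_{eH},Z\rangle_0=-\langle[X,Y]_{\fr{m}},Z\rangle_0+\langle[Z,X]_{\fr{m}},Y\rangle_0+\langle[Z,Y]_{\fr{m}},X\rangle_0$ (this is the Nomizu connection function $\tfrac12[X,Y]_{\fr{m}}+U(X,Y)$ combined with $[X^*,Y^*]=-[X,Y]^*$); the slip is harmless, since after setting $Y=X$ either version reduces to $\pm2\langle[Z,X]_{\fr{m}},X\rangle_0$, which vanishes by (\ref{natred}). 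Second, within this paper there is a shorter route once Theorem \ref{main2} is available: for $\alpha(t)=\phi_X(t)$ with $X\in\fr{m}$ the curve $\omega$ of Equation (\ref{omega}) is constantly equal to $X$, hence $F(t)\equiv A_{\mu_0}X=X$ and $[F(t),\omega(t)]=[X,X]=0$, so Equation (\ref{lax}) holds trivially; this is the $N=1$ case of Theorem \ref{main11}, which the author notes recovers Proposition \ref{oneiln}. Your proof has the advantage of being independent of that machinery, which is appropriate since the proposition is quoted before Theorem \ref{main2} is established.
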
 
  
  \subsection{Naturally reductive metrics induced by $\operatorname{Ad}$-invariant forms}\label{cf}
  Any $\operatorname{Ad}$-invariant, non-degenerate, symmetric bilinear form $Q$ in $\fr{g}$, which is positive-definite in $\fr{h}^{\bot}$, gives rise to a Riemannian naturally reductive metric in $G/H$ as follows:  Since $Q$ is positive-definite in $\fr{h}^{\bot}$, there is a $Q$-orthogonal direct sum decomposition $\fr{g}=\fr{h}\oplus \fr{m}$, where $\fr{m}=\fr{h}^{\bot}$.  We use the $\operatorname{Ad}$-invariance of $Q$ to write 
  
  \begin{equation*}Q(\operatorname{Ad}(H)\fr{m},\fr{h})=Q(\fr{m},\operatorname{Ad}(H)\fr{h})\subseteq Q(\fr{m},\fr{h})=\{0\}.\end{equation*}

Hence $\operatorname{Ad}(H)\fr{m}\subseteq \fr{m}$, and we can in turn identify $\fr{m}$ with $T_{eH}(G/H)$.  Moreover, the inner product $\langle \ ,\ \rangle_0:=\left.Q\right|_{\fr{m}\times \fr{m}}$ is $\operatorname{Ad}(H)$-invariant, therefore it defines a $G$-invariant Riemannian metric $\mu_0$ in $G/H$.  Since $Q$ is $\operatorname{Ad}$-invariant, any operator $\operatorname{ad}(X)$, $X\in \fr{m}$, is skew-symmetric with respect to $Q$.  By taking into account the $Q$-orthogonality between $\fr{m}$ and $\fr{h}$, for any $X,Y,Z\in \fr{m}$ we have

\begin{equation*}\langle [X,Y]_{\fr{m}},Z\rangle_0+\langle Y,[X,Z]_{\fr{m}}\rangle_0=Q([X,Y],Z)+Q(Y,[X,Z])=0,\end{equation*}
\noindent which, by virtue of Definition \ref{defnat}, implies that the metric $\mu_0$ is naturally reductive.  This leads us to the following definition.

\begin{definition}\label{indic}\emph{We will say that a Riemannian naturally reductive metric $\mu_0$ on $G/H$ \emph{is induced from an $\operatorname{Ad}$-invariant form in $\fr{g}$} if there exists an $\operatorname{Ad}$-invariant, non-degenerate, symmetric bilinear form $Q$ in $\fr{g}$, which is positive-definite in $\fr{m}=\fr{h}^{\bot}$, and such that the corresponding (to the metric $\mu_0$) $\operatorname{Ad}(H)$-invariant inner product $\langle \ ,\ \rangle_0$ in $\fr{m}$ is equal to $\left.Q\right|_{\fr{m}\times \fr{m}}$.}
\end{definition}

  According to a result of Kostant (see \cite{Kos}, \cite{Da-Zi}), any Riemannian naturally reductive metric $\mu_0$ in $G/H$ is induced from an $\operatorname{Ad}$-invariant form in a suitable Lie subalgebra of $\fr{g}$.  We will state the result of Kostant in Section \ref{appl}, where it will be used for proving Theorem \ref{main3}.

\subsection{Manifolds admitting Riemannian naturally reductive metrics}
Although the classification of naturally reductive spaces is open, several partial classification results have been obtained.  We mention them briefly below, providing the corresponding references:\\

\noindent (a) Any manifold $G/H$ with $G$ compact admits at least one Riemannian naturally reductive metric $\mu_0$.  Here $\mu_0$ is the induced metric in $G/H$ from any $\operatorname{Ad}$-invariant inner product on $\fr{g}$.  The Riemannian naturaly reductive metrics in simple compact Lie groups were classified in \cite{Da-Zi}.\\
\noindent (b) Any connected, non-compact semisimple Lie group admits a Riemannian naturally reductive metric $\mu_0$.  Here $\mu_0$ is $G\times K$-invariant, where $K$ is a connected subgroup of $G$ generated from a maximal compact subalgebra $\fr{k}$ of $\fr{g}$.  The metric is given as follows.  Let $Q$ be the Killing form of $\fr{g}$ and consider the Cartan decomposition $\fr{g}=\fr{k}\oplus \fr{p}$.  Then the metrics $\mu_0$ defined by the $\operatorname{Ad}(H)$-invariant form 

\begin{equation*}\lambda_1\left.Q\right|_{\fr{k}\times \fr{k}}+\lambda_2\left.Q\right|_{\fr{p}\times \fr{p}}\end{equation*} 

\noindent are naturally reductive (\cite{Hal-Ian}).  Moreover, the Riemannian naturally reductive metrics in spaces $G/H$, with $G$ connected, non-compact and semisimple, were classified in \cite{Go}.\\
\noindent (c) Any Riemannian left-invariant metric in a simply connected Heisenberg group is naturally reductive (\cite{Go}).  The naturally reductive nilmanifolds were classified in \cite{Go}, where it was also proved that any naturally reductive nilmanifold is at most two-step nilpotent.\\
\noindent (d) The naturally reductive spaces up to dimension six were classified in \cite{Ag-Fe-Fr}.

\section{Main results}\label{mainres}

We will state the main results of the paper.

\subsection{Algebraic formulation of the geodesic equation}

Consider a curve $\gamma:J\subseteq \mathbb R\rightarrow G/H$ with $0\in J$.  Since $\pi:G\rightarrow G/H$ is a submersion, there exists a smooth lift $\alpha:J\rightarrow G$ of $\gamma$ on $G$, that is $\gamma=\pi\circ \alpha$.  Let $\omega:J\rightarrow \fr{g}$ be the lift of $\alpha$ on $\fr{g}$ via the Maurer-Cartan form, that is

\begin{equation}\label{omega}\omega(t):=dL_{\alpha^{-1}(t)}(\dot{\alpha}(t)),\quad t\in J.\end{equation} 

Consider the decomposition $\fr{g}=\fr{h}\oplus\fr{m}$, and denote by $\omega_{\fr{m}}$ the $\fr{m}$-component of $\omega$.  Let $\mu$ be a $G$-invariant pseudo-Riemannian metric in $G/H$, let $A_{\mu}:\fr{m}\rightarrow \fr{m}$ be the corresponding metric endomorphism, and let $F:J\rightarrow \fr{m}$ be the curve defined by 

\begin{equation}\label{M}F(t):=A_{\mu}\omega_{\fr{m}}(t), \quad t\in J.\end{equation} 

The geodesic equation for $\gamma$ reduces to an algebraic equation for the curves $\omega$ and $F$ as given by the following theorem.

\begin{theorem}\label{main2} Let $G/H$ be a homogeneous space that admits a $G$-invariant Riemannian naturally reductive metric $\mu_0$ with respect to the decomposition $\fr{g}=\fr{h}\oplus \fr{m}$, and let $\mu$ be a $G$-invariant pseudo-Riemannian metric in $G/H$.  Let $\gamma=\pi\circ \alpha:J\rightarrow G/H$ be a curve through the origin $eH$ in $G/H$, and let $\omega:J\rightarrow \fr{g}$ and $F:J\rightarrow \fr{m}$ be the curves given by equations (\ref{omega}) and (\ref{M}) respectively.  Then $\gamma$ is a geodesic if and only if 

\begin{equation}\label{lax}\bigg(\frac{d}{dt}F(t)=[F(t),\omega(t)]\bigg) \makebox{$(\mathrm{mod}$ $\fr{h})$},\quad \makebox{for any} \quad t\in J.\end{equation}

Further assume that $\mu_0$ is induced from an $\operatorname{Ad}$-invariant form in $\fr{g}$ in the sense of Definition \ref{indic} (e.g. when $G$ is compact).  Then $\gamma$ is a geodesic if and only if 

\begin{equation}\label{lax1}\frac{d}{dt}F(t)=[F(t),\omega(t)],\quad \makebox{for any} \quad t\in J.\end{equation}

\end{theorem}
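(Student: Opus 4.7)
The plan is to derive the Lax-type equations (\ref{lax}) and (\ref{lax1}) from an algebraic rewriting of $\nabla_{\dot\gamma}\dot\gamma=0$, feeding in the symmetries provided by natural reductivity of $\mu_0$. First I would invoke Proposition \ref{main33}, which I expect to recast the geodesic equation as an identity in $\fr{m}$ of the form $\langle\dot\omega_{\fr{m}}(t),Y\rangle=\langle\omega_{\fr{m}}(t),[\omega(t),Y]_{\fr{m}}\rangle$ for every $Y\in\fr{m}$, where $\langle\,\cdot\,,\,\cdot\,\rangle$ denotes the $\operatorname{Ad}(H)$-invariant bilinear form on $\fr{m}$ associated with $\mu$. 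Applying the defining relation $\langle X,Y\rangle=\langle A_\mu X,Y\rangle_0$ and the fact that $A_\mu$ is time-independent (hence $\dot F=A_\mu\dot\omega_{\fr{m}}$), this becomes the equation $\langle\dot F(t),Y\rangle_0=\langle F(t),[\omega(t),Y]_{\fr{m}}\rangle_0$ for all $Y\in\fr{m}$.

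The crux is to rewrite the right-hand side as $\langle [F,\omega]_{\fr{m}},Y\rangle_0$, so that non-degeneracy of $\langle\,\cdot\,,\,\cdot\,\rangle_0$ on $\fr{m}$ yields $\dot F=[F,\omega]_{\fr{m}}$, i.e.\ (\ref{lax}). I split $\omega=\omega_{\fr{h}}+\omega_{\fr{m}}$ and treat the two contributions separately: the $\omega_{\fr{h}}$-piece is handled by $[\fr{h},\fr{m}]\subseteq\fr{m}$ combined with the $\langle\,\cdot\,,\,\cdot\,\rangle_0$-skew-symmetry of $\operatorname{ad}(\omega_{\fr{h}})|_{\fr{m}}$ (from $\operatorname{Ad}(H)$-invariance of $\mu_0$), while the $\omega_{\fr{m}}$-piece is exactly the content of the natural reductivity identity (\ref{natred}) applied to the triple $(\omega_{\fr{m}},F,Y)$. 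Reassembling the two pieces produces $\langle [F,\omega]_{\fr{m}},Y\rangle_0$, and, since $\dot F\in\fr{m}$, the identity $\dot F=[F,\omega]_{\fr{m}}$ is precisely (\ref{lax}).

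For the sharper equation (\ref{lax1}) I assume further that $\mu_0=Q|_{\fr{m}\times\fr{m}}$ for an $\operatorname{Ad}$-invariant form $Q$ on $\fr{g}$, and upgrade the $\mathrm{mod}\,\fr{h}$ identity to one in $\fr{g}$ by showing $[F,\omega]_{\fr{h}}=0$. Since $[F,\omega_{\fr{h}}]\in\fr{m}$, only $[F,\omega_{\fr{m}}]_{\fr{h}}$ could contribute. For $\xi\in\fr{h}$, $\operatorname{Ad}$-invariance of $Q$ together with $[\xi,F]\in\fr{m}$ gives $Q(\xi,[F,\omega_{\fr{m}}])=\langle[\xi,F],\omega_{\fr{m}}\rangle_0$. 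The infinitesimal $\operatorname{Ad}(H)$-equivariance of $A_\mu$ yields $[\xi,F]=A_\mu[\xi,\omega_{\fr{m}}]$; combined with the $\langle\,\cdot\,,\,\cdot\,\rangle_0$-symmetry of $A_\mu$ and the $\langle\,\cdot\,,\,\cdot\,\rangle_0$-skewness of $\operatorname{ad}(\xi)|_{\fr{m}}$, the composition $A_\mu\circ\operatorname{ad}(\xi)|_{\fr{m}}$ is itself $\langle\,\cdot\,,\,\cdot\,\rangle_0$-skew, so the quadratic form $\omega_{\fr{m}}\mapsto\langle A_\mu[\xi,\omega_{\fr{m}}],\omega_{\fr{m}}\rangle_0=\langle[\xi,F],\omega_{\fr{m}}\rangle_0$ vanishes identically. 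Non-degeneracy of $Q|_{\fr{h}\times\fr{h}}$, inherited from non-degeneracy of $Q$ and $\fr{m}=\fr{h}^\perp$, forces $[F,\omega_{\fr{m}}]_{\fr{h}}=0$.

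The main obstacle is not any individual identity but the bookkeeping: several closely related symmetry/invariance properties---identity (\ref{natred}) on $\fr{m}$, $\operatorname{Ad}(H)$-invariance of $\langle\,\cdot\,,\,\cdot\,\rangle_0$, $\operatorname{Ad}(H)$-equivariance of $A_\mu$, and (for the second part) full $\operatorname{Ad}$-invariance of $Q$ on $\fr{g}$---must be deployed in the correct order while the $\fr{h}$- versus $\fr{m}$-components of every bracket are tracked without error.
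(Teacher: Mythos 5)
Your proposal is correct and follows essentially the same route as the paper: Proposition \ref{main33}(iii) plus the substitution $\langle X,Y\rangle=\langle A_\mu X,Y\rangle_0$, then moving the bracket across the form by splitting $\omega=\omega_{\fr{h}}+\omega_{\fr{m}}$ and using $\operatorname{ad}(\fr{h})$-skewness together with the natural reductivity identity, and finally non-degeneracy of $\langle\ ,\ \rangle_0$. Your argument for $[F,\omega]_{\fr{h}}=0$ in the second part (showing $A_\mu\circ\operatorname{ad}(\xi)|_{\fr{m}}$ is $\langle\ ,\ \rangle_0$-skew so the quadratic form vanishes) is only a cosmetic repackaging of the paper's chain of identities yielding $Q([A_\mu X_{\fr{m}},X],a)=-Q([A_\mu X_{\fr{m}},X],a)$, using exactly the same ingredients.
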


When $G$ is compact, $H=\{e\}$ and $\mu$ is Riemannian, Theorem \ref{main2} recovers Theorem 1 in \cite{Arn}.  Theorem \ref{main2} will be proved in Section \ref{equiv}.

\subsection{Geodesics as deformations of one-parameter subgroups}\label{intep}

The following is the complete statement of the central result of the paper.

\begin{theorem}\label{main11}Let $G/H$ be a homogeneous space that admits a $G$-invariant Riemannian naturally reductive metric $\mu_0$ with respect to the decomposition $\fr{g}=\fr{h}\oplus \fr{m}$.  Let $\mu$ be a $G$-invariant pseudo-Riemannian metric in $G/H$ and let $A_{\mu}:\fr{m}\rightarrow \fr{m}$ be the corresponding metric endomorphism given by Equation (\ref{metrop}).   Assume that there exists a decomposition $\fr{m}=\fr{m}_1\oplus \cdots \oplus \fr{m}_N$, into eigenspaces of $A_{\mu}$, such that
\begin{equation}\label{integ}[\fr{m}_i,\fr{m}_j]\subseteq \fr{m}_i\quad \makebox{if} \quad i<j,\quad i,j=1,\dots,N.\end{equation}    

  Then any geodesic $\gamma$ of $(G/H,\mu)$, such that $\gamma(0)=eH$, is defined on $\mathbb R$, and there exist $N$ vectors $X_1,\dots, X_N\in \fr{g}$ such that 

\begin{equation}\label{GF}\gamma(t)=\phi_{X_1}(t)\cdots \phi_{X_N}(t)H, \quad \makebox{for any} \quad t\in \mathbb R.\end{equation}

\noindent The vectors $X_i$ are given explicitly as follows:  let $\lambda_1,\dots,\lambda_N$ be the eigenvalues of $A_{\mu}$ corresponding to the eigenspaces $\fr{m}_1,\dots,\fr{m}_N$, and denote by $\pi_i:\fr{m}\rightarrow \fr{m}_i$, $i=1,\dots,N$, the linear projections of $\fr{m}$ onto $\fr{m}_i$.  Then 

\begin{equation}\label{VE}X_1={\frac{1}{\lambda_1}\sum_{k=1}^N\lambda_k\pi_k(\dot{\gamma}(0))}, \quad \makebox{and} \quad X_i=\frac{(\lambda_{i-1}-\lambda_i)}{\lambda_{i-1}\lambda_i}\sum_{k=i}^N{\lambda_k\pi_k(\dot{\gamma}(0))}, \quad i\geq 2.\end{equation}
\end{theorem}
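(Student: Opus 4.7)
\emph{Strategy.} The plan is to show directly that $\gamma(t)=\phi_{X_1}(t)\cdots\phi_{X_N}(t)H$, with the $X_i$ given by (\ref{VE}), satisfies the Lax-type algebraic geodesic equation (\ref{lax}) of Theorem \ref{main2}, to match the prescribed initial data, and then conclude by uniqueness of geodesics. Completeness is automatic because a product of one-parameter subgroups is defined on all of $\mathbb R$. First I would verify the initial conditions: the lift $\alpha(t)=\phi_{X_1}(t)\cdots\phi_{X_N}(t)$ gives $\gamma(0)=eH$ and $\dot\alpha(0)=X_1+\cdots+X_N\in\fr m$, and using (\ref{VE}) the coefficient of each $\pi_j(\dot\gamma(0))$ in this sum telescopes,
\[
\frac{\lambda_j}{\lambda_1}+\sum_{i=2}^{j}\lambda_j\!\left(\frac{1}{\lambda_i}-\frac{1}{\lambda_{i-1}}\right)=1,
\]
so $\dot\alpha(0)=\dot\gamma(0)$ as required.

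\emph{Reduction to an algebraic identity.} Next I would compute, for this candidate lift, the Maurer--Cartan curve $\omega(t)$ and the weighted curve $F(t)=A_\mu\omega_{\fr m}(t)$ of Theorem \ref{main2}. This is where the auxiliary family $T_i^j:\mathbb R\to\operatorname{Aut}(\fr g)$ of Section \ref{nhomo} enters: iterating identity (\ref{id2}) expresses $\omega(t)$ as an explicit linear combination of vectors of the form $T_i^j(t)X_i$, and Proposition \ref{th3} then rewrites the geodesic equation (\ref{lax}) as a polynomial identity (modulo $\fr h$) in these vectors and the eigenvalues $\lambda_i$. My task reduces to verifying this identity at every $t\in\mathbb R$.

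\emph{Using the eigenspace condition.} Here Lemma \ref{claim2} is decisive: hypothesis (\ref{integ}) implies that each $\fr m_i$ is $\operatorname{ad}(\fr m_j)$-invariant for every $j>i$, and this invariance propagates to the operators $T_i^j(t)$ in the relevant index range. Consequently $A_\mu$ interacts with the brackets appearing in $[F,\omega]-\tfrac{d}{dt}F$ in a controlled way: a bracket of components in $\fr m_i$ and $\fr m_j$ with $i<j$ lands in $\fr m_i$, where $A_\mu$ acts by the scalar $\lambda_i$, so the obstruction to (\ref{lax}) decomposes into blocks indexed by eigenspaces. The precise coefficients $(\lambda_{i-1}-\lambda_i)/(\lambda_{i-1}\lambda_i)$ in (\ref{VE}) are tailored so that the scalar factors $\lambda_j-\lambda_i$ produced by each block cancel against the telescoping pattern of $\tfrac{d}{dt}F$.

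\emph{Main obstacle and conclusion.} The principal difficulty is the combinatorial bookkeeping in this last step: partitioning the double sum that defines $[F(t),\omega(t)]$ according to eigenspace indices $(i,j)$, using (\ref{integ}) to place each term in the correct $\fr m_i$, and checking that after inserting the scalars $\lambda_k$ from $A_\mu$ the sum collapses to $\tfrac{d}{dt}F(t)$ modulo $\fr h$. Once this algebraic identity is established, Theorem \ref{main2} guarantees that $\gamma$ is a geodesic; since its initial point and velocity match those of any prescribed geodesic through $eH$, standard uniqueness of geodesics concludes the proof, with completeness following from the global definition of $\gamma$ on all of $\mathbb R$.
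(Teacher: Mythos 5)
Your proposal follows essentially the same route as the paper: construct the candidate curve from the vectors (\ref{VE}), match the initial conditions via the telescoping sum, reduce to the algebraic identity of Proposition \ref{th3} using the operators $T_i^j$, and invoke Lemma \ref{claim2} (i.e.\ condition (\ref{integ})) so that $A_\mu$ acts by scalars on each bracket, after which the sum collapses and uniqueness of geodesics finishes the argument. The combinatorial bookkeeping you flag as the main obstacle is exactly what the paper carries out via Lemma \ref{claim} and the summation rearrangements, so your outline is correct and faithful to the paper's proof.
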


  \begin{remark}The form (\ref{GF}) is invariant by the left translations in $G/H$, therefore Theorem \ref{main11} implies that all geodesics in $G/H$ (and not only those through $eH$) are orbits of a product of $N$ one-parameter subgroups.  Indeed, under the assumptions of Theorem \ref{main11}, and given that the left translations are isometries, any geodesic $\widetilde{\gamma}$ through a point $gH\in G/H$ is given by 
  
  \begin{equation*}\widetilde{\gamma}(t)=\tau_g(\phi_{X_1}(t)\cdots \phi_{X_N}(t)H)=(g\phi_{X_1}(t)g^{-1})\cdots (g\phi_{X_N}(t)g^{-1})(gH),\end{equation*} which, by virtue of identity (\ref{id2}), is equal to $\phi_{\operatorname{Ad}(g)X_1}(t)\cdots \phi_{\operatorname{Ad}(g)X_N}(t)(gH)$.   Hence $\widetilde{\gamma}$ is also an orbit of a product of $N$ one-parameter subgroups.
  
  \end{remark}
  

 Theorem \ref{main11} is proved in Section \ref{proof}.

\subsection{Existence of metrics satisfying Theorem \ref{main11}}  For any homogeneous manifold $M$ admitting a Riemannian naturally reductive metric, the following theorem establishes the existence of metrics $\mu$ in $M$ such that the geodesics of $(M,\mu)$ have the form (\ref{GF}). 

\begin{theorem}\label{main3} Let $M$ be a homogeneous manifold that admits a Riemannian naturally reductive metric.  Then there exists a homogeneous space $G/H$, diffeomorphic to $M$, with the following property:  For any $N\in \mathbb N$ and for any $N-tuple$ of Lie groups $(H_1,\dots,H_N)$ such that

\begin{equation*}H\subset H_1\subset \cdots \subset H_{N-1}\subset H_N:=G,\end{equation*}

\noindent there exists an $N$-parameter family of $G$-invariant pseudo-Riemannian metrics\\ $\mu=\mu(\lambda_1,\dots,\lambda_N)$, where $\lambda_1,\dots,\lambda_N\in \mathbb R^*$, such that $(G/H,\mu)$ satisfies the conditions of Theorem \ref{main11}.  In particular, the geodesics of $(G/H,\mu)$ are given by Equation (\ref{GF}).\\
Moreover, the aforementioned property holds for any homogeneous space $G/H$ with $G$ compact.
\end{theorem}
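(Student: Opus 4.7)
The plan is to invoke the theorem of Kostant stated at the end of Section \ref{cf}: any Riemannian naturally reductive metric on $M$ is induced from an $\operatorname{Ad}$-invariant, non-degenerate, symmetric bilinear form on some Lie subalgebra. Applying this yields a transitive Lie group realization $M = G/H$ together with an $\operatorname{Ad}$-invariant form $Q$ on $\fr{g}$ that is positive definite on $\fr{m} := \fr{h}^{\perp_Q}$ and whose restriction $\left.Q\right|_{\fr{m}\times\fr{m}}$ gives the naturally reductive metric $\mu_0$. The ``moreover'' clause requires no such detour: when $G$ is compact one simply averages to obtain an $\operatorname{Ad}$-invariant inner product $Q$ on $\fr{g}$.

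Given a chain $H \subset H_1 \subset \cdots \subset H_{N-1} \subset H_N = G$ with Lie algebras $\fr{h}\subset \fr{h}_1 \subset \cdots \subset \fr{h}_N = \fr{g}$, I would construct the eigenspace decomposition by taking $Q$-orthogonal complements in \emph{reverse} order: setting $\fr{h}_0:=\fr{h}$, define
\begin{equation*}
\fr{m}_i := \fr{h}_{N-i+1}\ominus_Q \fr{h}_{N-i},\quad i=1,\dots,N,
\end{equation*}
so that $\fr{g} = \fr{h}\oplus \fr{m}_1\oplus \cdots \oplus \fr{m}_N$ and $\fr{m} = \fr{m}_1\oplus\cdots\oplus \fr{m}_N$. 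For $\lambda_1,\dots,\lambda_N\in \mathbb R^*$, let $\mu = \mu(\lambda_1,\dots,\lambda_N)$ be the metric associated via (\ref{lerdak}) to the endomorphism $A_{\mu} = \sum_i \lambda_i\left.\operatorname{Id}\right|_{\fr{m}_i}$.

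The verification splits into two claims. First, each $\fr{m}_i$ is $\operatorname{Ad}(H)$-invariant: since $H$ is contained in every $H_j$, the group $\operatorname{Ad}(H)$ preserves every $\fr{h}_j$, and the $\operatorname{Ad}$-invariance of $Q$ forces it to preserve each $Q$-orthogonal complement $\fr{m}_i$; hence $A_{\mu}$ is $\left.\operatorname{Ad}(H)\right|_{\fr{m}}$-equivariant and defines a legitimate $G$-invariant pseudo-Riemannian metric. Second, for $i<j$ one has $N-j+1\leq N-i$, so $\fr{m}_j\subseteq \fr{h}_{N-j+1}\subseteq \fr{h}_{N-i}$; since $H_{N-i}\subset H_{N-i+1}$, the group $\operatorname{Ad}(H_{N-i})$ preserves both $\fr{h}_{N-i+1}$ and $\fr{h}_{N-i}$, hence $\fr{m}_i$, and differentiating yields $[\fr{h}_{N-i},\fr{m}_i]\subseteq \fr{m}_i$. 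In particular $[\fr{m}_j,\fr{m}_i]\subseteq \fr{m}_i$, which is the condition (\ref{integ}). Theorem \ref{main11} then applies and delivers the geodesics in the claimed form (\ref{GF}).

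The principal obstacle is conceptual rather than computational: the indexing convention must be reversed relative to the subgroup chain so that the non-symmetric condition (\ref{integ}) on ordered pairs aligns with the filtration by $\fr{h}_j$. Once this matching is in place, every remaining step is a routine exploitation of $\operatorname{Ad}$-invariance of $Q$ and of the nesting $H_{N-i}\subset H_{N-i+1}$; the substantive geometric content has already been consumed by Kostant's theorem (for the existence of $Q$) and by Theorem \ref{main11} (for the form of the geodesics).
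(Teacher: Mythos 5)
Your proposal follows essentially the same route as the paper: invoke Kostant's theorem (Theorem \ref{Kost}) to realize $M$ as $G/H$ with an $\operatorname{Ad}(G)$-invariant form $Q$, define $\fr{m}_i$ as the $Q$-orthogonal complement of $\fr{h}_{N-i}$ in $\fr{h}_{N-i+1}$ (exactly the decompositions (\ref{miai})), and verify $\operatorname{Ad}(H)$-invariance of the $\fr{m}_i$ and Condition (\ref{integ}) from the $\operatorname{Ad}$-invariance of $Q$ and the nesting of the $\fr{h}_j$, which is the content of Lemma \ref{claim3}. The only cosmetic difference is that you obtain $\operatorname{Ad}(H_{N-i})\fr{m}_i\subseteq\fr{m}_i$ directly from preservation of the two subalgebras and of $Q$, where the paper argues via containment in $\fr{h}_{N-i+1}$ plus orthogonality to $\fr{h}_{N-i}$; this is the same idea.
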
 

The proof of Theorem \ref{main3}, along with the explicit construction of the metrics \\$\mu(\lambda_1,\dots,\lambda_N)$, will be given in Section \ref{appl}.

\section{The geodesic equation in homogeneous spaces}\label{equiv}

In this section, we derive several equivalent expressions for the geodesic equation in pseudo-Riemannian homogeneous spaces $(G/H,\mu)$ (Proposition \ref{main33}).  Subsequently, by using those expressions, we prove Theorem \ref{main2}.  We refer the reader to \cite{Khe-Mi} for a Hamiltonian formulation of the geodesic equation in Riemannian homogeneous spaces.\\
Firstly, we recall the Killing vector fields in $G/H$ induced by vectors in $\fr{g}$:  for any $W\in \fr{g}$, the correspondence $W^*:G/H\rightarrow T(G/H)$ given by 

\begin{equation}\label{kil}W^*_{gH}=\left.\frac{d}{dt}\right|_{t=0}\phi_W(t)gH=d(\pi\circ R_g)W,\quad gH\in G/H,\end{equation}

\noindent is a well-defined vector field in $G/H$.   Moreover, it is a \emph{Killing vector field with respect to any $G$-invariant metric in $G/H$}.  To verify the last assertion, we observe that the local flows of $W^*$ are the functions $f_t:U\subseteq G/H\rightarrow G/H$ given by $f_t(gH)=\phi_W(t)gH$, for $gH\in U$;  the flows are isometries with respect to any $G$-invariant metric because $df_t=d\tau_{\phi_W(t)}$ and the left translations are isometries.\\
  Since $\pi$ is a submersion, there exists an orthonormal frame in the tangent bundle $T(G/H)$ that consists of vector fields of the form $W^*$, $W\in \fr{g}$.  Therefore, 
\begin{equation}\label{span}T_{gH}(G/H)=\operatorname{span}_{\mathbb R}\{W_{gH}^*:W\in \fr{g}\},\quad \makebox{for any $gH\in G/H$}.\end{equation}
 
 We will use the following notation.

\begin{no}We denote by $\fr{X}(G/H)$ the Lie algebra of vector fields in $G/H$.  Let $X,Y,Z\in \fr{X}(G/H)$ and let $\mu$ be a metric in $G/H$.  We denote by $X\mu(Y,Z)$ the function induced from the action of $X$ on the function $\mu(Y,Z):G/H\rightarrow \mathbb R$.  In other words, for $p\in G/H$, $X\mu(Y,Z)(p)$, is the derivative of $\mu(Y,Z)$ in the direction of the vector $X_p$.
\end{no}

We proceed to state the first result of this section.

\subsection{Equivalent expressions for the geodesic equation}

\begin{prop}\label{main33}Let $G/H$ be a homogeneous manifold, let $\mu$ be a $G$-invariant pseudo-Riemannian metric in $G/H$ and let $\gamma=\pi\circ \alpha:J\rightarrow G/H$ be a curve in $G/H$ such that $\gamma(0)=eH$.  The following are equivalent:\\

\noindent (i) The curve $\gamma$ is a geodesic of $(G/H,\mu)$.\\

\noindent (ii) The curve $\gamma$ satisfies the equation

\begin{equation*}\label{equiv1}\quad \dot{\gamma}\mu( W^*,\dot{\gamma})({\gamma(t)})=0,\end{equation*}

\noindent for any $W\in \fr{g}$ and $t\in J$, where $W^*$ is given by Equation (\ref{kil}).\\
Moreover, assume that $G/H$ is reductive, admitting the decomposition $\fr{g}=\fr{h}\oplus \fr{m}$.  Let $\langle \ ,\ \rangle$ be the $\operatorname{Ad}(H)$-invariant form in $\fr{m}$ corresponding to the metric $\mu$, let $\omega:J\rightarrow \fr{g}$ be the lift of $\alpha$ in $\fr{g}$ given by Equation (\ref{omega}), and denote by $\omega_{\fr{m}}(t)$ the $\fr{m}$-component of $\omega(t)$.  Then (i) and (ii) are equivalent to the following:\\

\noindent (iii) The curve $\omega$ satisfies the differential equation

 \begin{equation}\label{geo1}\langle \dot{\omega}_{\fr{m}}(t),W\rangle+\langle \omega_{\fr{m}}(t),[W,\omega(t)]_{\fr{m}}\rangle=0,\end{equation}

\noindent for any $W\in \fr{m}$ and $t\in J$.\\

\end{prop}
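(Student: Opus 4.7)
\medskip

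\noindent\textbf{Proof proposal for Proposition \ref{main33}.}

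The plan is to prove the equivalences in two steps: first (i)$\Leftrightarrow$(ii), which is purely Riemannian/geometric and uses only that each $W^{*}$ is Killing; then (ii)$\Leftrightarrow$(iii), which is a translation of (ii) to the Lie-algebraic setting via the Maurer--Cartan lift $\omega$.

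For (i)$\Leftrightarrow$(ii), I would use the defining property of a Killing field with respect to $\mu$, namely that $\mu(\nabla_{\dot\gamma}W^{*},\dot\gamma)=0$ (skew-symmetry of the endomorphism $X\mapsto \nabla_{X}W^{*}$ with respect to $\mu$). Combined with the Leibniz rule this yields
\begin{equation*}
\dot\gamma\,\mu(W^{*},\dot\gamma)(\gamma(t))=\mu(\nabla_{\dot\gamma}W^{*},\dot\gamma)+\mu(W^{*},\nabla_{\dot\gamma}\dot\gamma)=\mu\bigl(W^{*}_{\gamma(t)},\nabla_{\dot\gamma}\dot\gamma(t)\bigr).
\end{equation*}
Then $\nabla_{\dot\gamma}\dot\gamma\equiv 0$ forces (ii); conversely, since by (\ref{span}) the vectors $\{W^{*}_{gH}:W\in\fr g\}$ span $T_{gH}(G/H)$ and $\mu$ is non-degenerate, vanishing of the pairing for every $W\in\fr g$ forces $\nabla_{\dot\gamma}\dot\gamma(t)=0$.

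For (ii)$\Leftrightarrow$(iii), I would transport both arguments in $\mu(W^{*},\dot\gamma)$ back to the origin using the isometry $\tau_{\alpha(t)^{-1}}$. Differentiating $\gamma=\pi\circ\alpha$ and applying (\ref{interc}) gives $\dot\gamma(t)=d\tau_{\alpha(t)}(\omega_{\fr m}(t))$, so $d\tau_{\alpha(t)^{-1}}\dot\gamma(t)=\omega_{\fr m}(t)$. For the Killing field, definition (\ref{kil}) together with identity (\ref{id2}) give $d\tau_{\alpha(t)^{-1}}W^{*}_{\gamma(t)}=\bigl(\operatorname{Ad}(\alpha(t)^{-1})W\bigr)_{\fr m}$. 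Writing $W(t):=\operatorname{Ad}(\alpha(t)^{-1})W$, the standard Maurer--Cartan computation (using $\dot\alpha=dL_{\alpha}\omega$) yields $\dot W(t)=[W(t),\omega(t)]$. Thus $G$-invariance of $\mu$ rewrites (ii) as
\begin{equation*}
\tfrac{d}{dt}\langle W(t)_{\fr m},\omega_{\fr m}(t)\rangle=\langle [W(t),\omega(t)]_{\fr m},\omega_{\fr m}(t)\rangle+\langle W(t)_{\fr m},\dot\omega_{\fr m}(t)\rangle=0,
\end{equation*}
for every $W\in\fr g$ and $t\in J$. Since $\operatorname{Ad}(\alpha(t)^{-1})$ is an automorphism of $\fr g$, the vector $W(t)$ ranges over all of $\fr g$ as $W$ does, so (ii) is equivalent to the displayed identity with $W(t)$ replaced by an arbitrary $\widetilde W\in\fr g$.

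It remains to show that this last identity for arbitrary $\widetilde W\in\fr g$ is equivalent to the restricted identity (\ref{geo1}) for $\widetilde W\in\fr m$, which is the content of (iii). The $\fr m$-part of the decomposition $\fr g=\fr h\oplus\fr m$ gives the equation for $\widetilde W\in\fr m$ directly. For $\widetilde W\in\fr h$ the first term $\langle \widetilde W_{\fr m},\dot\omega_{\fr m}\rangle$ vanishes because $\widetilde W_{\fr m}=0$; splitting $\omega=\omega_{\fr h}+\omega_{\fr m}$, the bracket $[\widetilde W,\omega_{\fr h}]\in\fr h$ contributes nothing to the $\fr m$-component, while $[\widetilde W,\omega_{\fr m}]\in\fr m$ by $\operatorname{Ad}(H)$-invariance of $\fr m$, and the skew-symmetry of $\operatorname{ad}(\widetilde W)|_{\fr m}$ with respect to any $\operatorname{Ad}(H)$-invariant form (see Subsection \ref{klgries}) gives $\langle [\widetilde W,\omega_{\fr m}],\omega_{\fr m}\rangle=0$. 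Hence the $\fr h$ case is automatic and (ii)$\Leftrightarrow$(iii) follows. The main technical point to be careful with is this last verification that the $\fr h$-component of $\widetilde W$ contributes trivially; everything else is bookkeeping with the Maurer--Cartan form and identities (\ref{interc}) and (\ref{id2}).
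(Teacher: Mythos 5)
Your proposal is correct and follows essentially the same route as the paper: the equivalence (i)$\Leftrightarrow$(ii) rests on the Killing property of $W^{*}$ (your direct use of $\mu(\nabla_{\dot\gamma}W^{*},\dot\gamma)=0$ is just a streamlined version of the paper's Koszul-formula computation, which reduces to the same identity), and (ii)$\Leftrightarrow$(iii) is obtained by translating to the origin via $\operatorname{Ad}(\alpha(t)^{-1})$ exactly as in the paper. Your final observation that the $\fr{h}$-component of $\widetilde W$ contributes trivially is precisely the content of the paper's Lemma \ref{lem2}, so no gap remains.
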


In order to prove Proposition \ref{main33} we will need the following lemmas.

\begin{lemma}\label{lem1}

The lift $\omega$ of $\alpha$ on $\fr{g}$, given by Equation (\ref{omega}), satisfies the relation 

\begin{equation*}\omega(t)=-dR_{\alpha(t)}(\dot{\alpha}^{-1}(t)),\quad \makebox{for any} \quad t\in J,\end{equation*}

\noindent where $\dot{\alpha}^{-1}(t)=\frac{d}{dt}{\alpha}^{-1}(t)$.\end{lemma}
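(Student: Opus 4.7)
The plan is to derive the identity from the constraint $\alpha^{-1}(t)\alpha(t)=e$ by differentiation. Since both sides of the target equation lie in $\fr{g}=T_eG$ (the map $dL_{\alpha^{-1}(t)}$ sends $T_{\alpha(t)}G$ to $T_eG$, while $dR_{\alpha(t)}$ sends $T_{\alpha^{-1}(t)}G$ to $T_{\alpha^{-1}(t)\alpha(t)}G=T_eG$), the equality is an equality in the Lie algebra and can be checked by a single application of the Leibniz rule for group multiplication.

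Concretely, let $m:G\times G\to G$ denote group multiplication, whose differential at a point $(g,h)$ satisfies
\begin{equation*}
dm_{(g,h)}(X,Y)=dR_h(X)+dL_g(Y),\qquad X\in T_gG,\ Y\in T_hG.
\end{equation*}
Applying this to the constant curve $t\mapsto m(\alpha^{-1}(t),\alpha(t))=e$, whose derivative vanishes, yields
\begin{equation*}
dR_{\alpha(t)}(\dot{\alpha}^{-1}(t))+dL_{\alpha^{-1}(t)}(\dot{\alpha}(t))=0.
\end{equation*}
Rearranging and using the definition $\omega(t)=dL_{\alpha^{-1}(t)}(\dot{\alpha}(t))$ gives precisely the claimed formula $\omega(t)=-dR_{\alpha(t)}(\dot{\alpha}^{-1}(t))$.

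There is essentially no obstacle here beyond correctly stating the product rule for Lie group multiplication and tracking which tangent spaces each differential acts between. One could alternatively differentiate $\alpha(t)\alpha^{-1}(t)=e$ instead, but this produces the symmetric identity $dR_{\alpha^{-1}(t)}(\dot{\alpha}(t))=-dL_{\alpha(t)}(\dot{\alpha}^{-1}(t))$, which is not the form needed; using $\alpha^{-1}(t)\alpha(t)=e$ is what matches the left-invariant Maurer–Cartan convention in Equation~(\ref{omega}).
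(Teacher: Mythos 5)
Your proof is correct and follows essentially the same route as the paper: both differentiate the constant curve $\alpha^{-1}(t)\alpha(t)=e$ via the Leibniz rule for group multiplication to obtain $dR_{\alpha(t)}(\dot{\alpha}^{-1}(t))+dL_{\alpha^{-1}(t)}(\dot{\alpha}(t))=0$. Your explicit statement of $dm_{(g,h)}(X,Y)=dR_h(X)+dL_g(Y)$ is just a slightly more formal phrasing of the same computation.
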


 \begin{proof}  For any $t\in J$, Leibniz's product rule in $G$ implies that

\begin{eqnarray*}0&=&\frac{d}{dt}\alpha^{-1}(t)\alpha(t)=\left.\frac{d}{ds}\right|_{s=0}\alpha^{-1}(t+s)\alpha(t)+\left.\frac{d}{ds}\right|_{s=0}\alpha^{-1}(t)\alpha(t+s)\\
&&\\
&=&dR_{\alpha(t)}(\dot{\alpha}^{-1}(t))+dL_{\alpha^{-1}(t)}(\dot{\alpha}(t))=dR_{\alpha(t)}(\dot{\alpha}^{-1}(t))+\omega(t).\quad \quad\quad\quad\quad\quad\quad\qedhere\end{eqnarray*}
\end{proof}
	
\begin{lemma}\label{lem2}
Assume that $G/H$ is reductive, admitting the decomposition $\fr{g}=\fr{h}\oplus \fr{m}$, and let $\langle \ ,\ \rangle$ be an $\operatorname{Ad}(H)$-invariant form in $\fr{m}$.  Let $X,W\in \fr{g}$.  Then
 
 \begin{equation*}\label{eqlem2}\langle X_{\fr{m}},[W,X]_{\fr{m}}\rangle=\langle X_{\fr{m}}, [W_{\fr{m}},X]_{\fr{m}}\rangle.\end{equation*}

\end{lemma}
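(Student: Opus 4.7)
The plan is to decompose $W$ according to $\fr{g}=\fr{h}\oplus \fr{m}$ as $W=W_{\fr{h}}+W_{\fr{m}}$, so by bilinearity
\[
[W,X]_{\fr{m}}=[W_{\fr{h}},X]_{\fr{m}}+[W_{\fr{m}},X]_{\fr{m}},
\]
and consequently the claimed identity reduces to showing
\[
\langle X_{\fr{m}},[W_{\fr{h}},X]_{\fr{m}}\rangle=0.
\]

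To analyse this inner product, I would further split $X=X_{\fr{h}}+X_{\fr{m}}$. Since $W_{\fr{h}},X_{\fr{h}}\in \fr{h}$ and $\fr{h}$ is a subalgebra, we have $[W_{\fr{h}},X_{\fr{h}}]\in \fr{h}$, so $[W_{\fr{h}},X_{\fr{h}}]_{\fr{m}}=0$. Reductivity gives $[\fr{h},\fr{m}]\subseteq \fr{m}$, hence $[W_{\fr{h}},X_{\fr{m}}]\in \fr{m}$ already lies in $\fr{m}$, and therefore
\[
[W_{\fr{h}},X]_{\fr{m}}=[W_{\fr{h}},X_{\fr{m}}].
\]

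The vanishing of $\langle X_{\fr{m}},[W_{\fr{h}},X_{\fr{m}}]\rangle$ is then immediate from the fact, recalled in Subsection \ref{klgries}, that $\operatorname{Ad}(H)$-invariance of $\langle \ ,\ \rangle$ forces every $\operatorname{ad}(Z)$, $Z\in \fr{h}$, to act by skew-symmetric endomorphisms on $\fr{m}$; applying this with $Z=W_{\fr{h}}$ to the pair $(X_{\fr{m}},X_{\fr{m}})$ gives $\langle X_{\fr{m}},[W_{\fr{h}},X_{\fr{m}}]\rangle=-\langle X_{\fr{m}},[W_{\fr{h}},X_{\fr{m}}]\rangle$, so the quantity is zero. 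Putting the pieces together yields the lemma.

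There is no real obstacle here: the statement is a purely algebraic consequence of reductivity together with the $\operatorname{Ad}(H)$-invariance of the bilinear form, and the only mild care required is to keep the $\fr{h}$- and $\fr{m}$-projections straight when taking brackets. The core takeaway is that only the $\fr{m}$-component of $W$ can contribute to $\langle X_{\fr{m}},[W,X]_{\fr{m}}\rangle$, because the $\fr{h}$-component acts skew-symmetrically on $\fr{m}$.
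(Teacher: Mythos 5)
Your argument is correct and coincides with the paper's own proof: both decompose $W=W_{\fr{h}}+W_{\fr{m}}$, observe that $[W_{\fr{h}},X]_{\fr{m}}=[W_{\fr{h}},X_{\fr{m}}]$ via $[\fr{h},\fr{h}]\subseteq\fr{h}$ and $[\fr{h},\fr{m}]\subseteq\fr{m}$, and kill the remaining term by the skew-symmetry of $\operatorname{ad}(W_{\fr{h}})$ with respect to the $\operatorname{Ad}(H)$-invariant form. No differences worth noting.
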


\begin{proof}
We write $W=W_{\fr{h}}+W_{\fr{m}}$.  Using the fact that $[\fr{h},\fr{h}]\subset \fr{h}$, the $\operatorname{ad}(\fr{h})$-invariance of $\fr{m}$ as well as the skew symmetry of any operator in $\operatorname{ad}(\fr{h})$ with respect to $\langle \ ,\ \rangle$, we obtain  
	
	\begin{eqnarray*}\langle X_{\fr{m}},[W,X]_{\fr{m}}\rangle&=&\langle X_{\fr{m}},[W_{\fr{h}}+W_{\fr{m}},X]_{\fr{m}}\rangle\\
&=&\langle X_{\fr{m}},[W_{\fr{h}},X]_{\fr{m}}\rangle+\langle X_{\fr{m}},[W_{\fr{m}},X]_{\fr{m}}\rangle\\
&=&\langle X_{\fr{m}},[W_{\fr{h}},X_{\fr{m}}]\rangle+\langle X_{\fr{m}},[W_{\fr{m}},X]_{\fr{m}}\rangle\\
&=&\langle X_{\fr{m}},[W_{\fr{m}},X]_{\fr{m}}\rangle.\quad \quad\quad\quad\quad\quad\quad \quad\quad\quad\quad\quad\quad \quad\quad\quad\quad\quad\hspace{3pt}\qedhere
\end{eqnarray*}\end{proof}

We proceed with the proof of Proposition \ref{main33}.\\

\noindent \emph{Proof of Proposition \ref{main33}.}  We consider the Levi-Civita connection $\nabla$ on $T(G/H)$ corresponding to the metric $\mu$.  Denote by $\dot{\gamma}$ a local extension, in $G/H$, of the vector field $\dot{\gamma}(t)$ along the curve $\gamma$.  Then $\gamma$ is a geodesic if and only if $\nabla_{\dot{\gamma}}\dot{\gamma}(\gamma(t))=0$, or equivalently, if and only if

\begin{equation} \label{kozz} \mu (V,\nabla_{\dot{\gamma}}\dot{\gamma})(\gamma(t))=0,\end{equation}

\noindent for any vector field $V\in \fr{X}(G/H)$ and any $t\in J$.  By Koszul's formula (see \cite{On}, p. 61), we have

\begin{equation} \label{koz} \mu (V,\nabla_{\dot{\gamma}}\dot{\gamma})=\dot{\gamma}\mu( V,\dot{\gamma})+\mu(\dot{\gamma},[V,\dot{\gamma}])-\frac{1}{2}V\mu(\dot{\gamma},\dot{\gamma}),\end{equation}

\noindent for any $V\in \fr{X}(G/H)$.  By virtue of relation (\ref{span}) we may assume that 

\begin{equation}\label{sunst}V=W^*,\quad \makebox{for} \quad W\in \fr{g},\end{equation} 

\noindent without any loss of generality.  We also recall the following identities for the Levi-Civita connection.

\begin{eqnarray}\label{identit1}X\mu( Y,Z)=\mu( \nabla_X{Y},Z)+\mu( Y,\nabla_X{Z}),\quad &\makebox{for any}&  \quad X,Y,Z\in \fr{X}(G/H),\\
\label{identit2}\nabla_X{Y}-\nabla_Y{X}=[X,Y],\quad &\makebox{for any}&  \quad X,Y,Z\in \fr{X}(G/H).\end{eqnarray}

Moreover, if $X$ is a Killing vector field then

\begin{equation}\label{identit3}\mu( \nabla_Y{X},Y)=0, \quad \makebox{for any}  \quad Y\in \fr{X}(G/H) \end{equation}

\noindent (see \cite{On}, p. 251).  Under the substitution (\ref{sunst}) and by taking into account identities (\ref{identit1}) - (\ref{identit3}) for $X=W^*$, $Y=Z=\dot{\gamma}$, along with the fact that $W^*$ is a Killing vector field, the sum of the last two terms on the right-hand side of Equation (\ref{koz}) becomes

\begin{eqnarray}
 \mu(\dot{\gamma},[V,\dot{\gamma}])-\frac{1}{2}V\mu(\dot{\gamma},\dot{\gamma})&=&\mu(\dot{\gamma},[W^*,\dot{\gamma}])-\frac{1}{2}W^*\mu(\dot{\gamma},\dot{\gamma})\nonumber\\
 &=&\mu(\dot{\gamma},[W^*,\dot{\gamma}])-\mu( \nabla_{W^*}{\dot{\gamma}},\dot{\gamma})\nonumber \\
&=&\mu( \dot{\gamma}, [W^*,\dot{\gamma}]-\nabla_{W^*}{\dot{\gamma}})\nonumber\\
&=&-\mu( \dot{\gamma}, \nabla_{\dot{\gamma}}{W^*})=0\label{palto}.
\end{eqnarray}

By taking into account Equation (\ref{palto}), equations (\ref{kozz}) and (\ref{koz}) imply that $\gamma$ is a geodesic if and only if 

\begin{equation*}\dot{\gamma} \mu( W^*, \dot{\gamma})(\gamma(t))=0,\end{equation*}

\noindent for any $W\in \fr{g}$, $t\in J$, which verifies the equivalence between (i) and (ii).  Next we will prove the equivalence between (ii) and (iii).  Let $W\in \fr{g}$.  For a fixed $t\in J$ we set 

\begin{equation}\label{Zi}Z(s):=\operatorname{Ad}(\alpha^{-1}(t+s))W\quad \makebox{and} \quad Z:=Z(0).\end{equation}

  By using successively Equation (\ref{kil}), the hypothesis that $\gamma=\pi\circ \alpha$, the invariance of the metric $\mu$ by the left translations $\tau_g:G/H\rightarrow G/H$, the property $df\circ dg=d(f\circ g)$ of the differential, the assumption that $\mu( \ ,\ )_{eH}=\langle \ ,\ \rangle$, identity (\ref{interc}), Equation (\ref{omega}) for $\omega$ as well as relation (\ref{Zi}),  we obtain

\begin{eqnarray}\label{array1}\dot{\gamma} \mu( W^*,\dot{\gamma})(\gamma(t))&=&\left.\frac{d}{ds}\right|_{s=0}\mu\big( W_{\gamma(t+s)}^*,\dot{\gamma}(t+s)\big)_{\gamma(t+s)}\nonumber\\
&&\nonumber\\
&=&\left.\frac{d}{ds}\right|_{s=0}\mu\big( d(\pi\circ R_{\alpha(t+s)})W,d\pi(\dot{\alpha}(t+s))\big)_{\pi(\alpha(t+s))}\nonumber\\
&&\nonumber\\
&=&\left.\frac{d}{ds}\right|_{s=0}\mu\big( d(\tau_{\alpha^{-1}(t+s)}\circ \pi\circ R_{\alpha(t+s)})W,d(\tau_{\alpha^{-1}(t+s)}\circ \pi)(\dot{\alpha}(t+s))\big)_{eH}\nonumber\\
&&\nonumber\\
&=&\left.\frac{d}{ds}\right|_{s=0}\big\langle d(\tau_{\alpha^{-1}(t+s)}\circ \pi\circ R_{\alpha(t+s)})W,d(\tau_{\alpha^{-1}(t+s)}\circ \pi)(\dot{\alpha}(t+s))\big\rangle\nonumber\\
&&\nonumber\\
&=&\left.\frac{d}{ds}\right|_{s=0}\big\langle d(\pi\circ L_{\alpha^{-1}(t+s)}\circ R_{\alpha(t+s)})W,d(\pi\circ L_{\alpha^{-1}(t+s)})(\dot{\alpha}(t+s))\big\rangle\nonumber\\
&&\nonumber \\
&=&\left.\frac{d}{ds}\right|_{s=0}\big\langle d\pi(\operatorname{Ad}(\alpha^{-1}(t+s))W),d\pi (\omega(t+s))\big\rangle\nonumber\\
&&\nonumber\\
&=&\left.\frac{d}{ds}\right|_{s=0}\big\langle d\pi(Z(s)),d\pi (\omega(t+s))\big\rangle\nonumber\\
&&\nonumber\\
&=&\big\langle d\pi(Z),d\pi (\dot{\omega}(t))\big\rangle+\big\langle d\pi\big(\left.\frac{d}{ds}\right|_{s=0}Z(s)\big),d\pi (\omega(t))\big\rangle.
\end{eqnarray}

Subsequently, by using relation (\ref{Zi}), the fact that $\operatorname{Ad}:G\rightarrow \operatorname{Aut}(\fr{g})$ is a homomorphism as well as Lemma \ref{lem1}, we obtain 

\begin{eqnarray*}\left.\frac{d}{ds}\right|_{s=0}Z(s)&=&\left.\frac{d}{ds}\right|_{s=0}(\operatorname{Ad}(\alpha^{-1}(t+s)\alpha(t))Z\\
&=&(d\operatorname{Ad})_e(\left.\frac{d}{ds}\right|_{s=0}\alpha^{-1}(t+s)\alpha(t))Z\\
&=&
(d\operatorname{Ad})_e(dR_{\alpha(t)}(\dot{\alpha}^{-1}(t)))Z\\
&=&
(d\operatorname{Ad})_e(-\omega(t))Z=[Z,\omega(t)].
\end{eqnarray*}

By substituting the above expression into Equation (\ref{array1}), we obtain the equality 

\begin{equation*}\label{ii-iii}\dot{\gamma}\mu( W^*,\dot{\gamma})(\gamma(t))=\big\langle d\pi(Z),d\pi(\dot{\omega}(t))\big\rangle+\big\langle d\pi([Z,\omega(t)]),d\pi(\omega(t))\big\rangle.\end{equation*}

Thus (ii) is equivalent to

\begin{equation}\label{HLDR}\big\langle d\pi(\dot{\omega}(t)),d\pi(Z)\big\rangle+\big\langle d\pi(\omega(t)), d\pi([Z,\omega(t)]\big\rangle=0,\end{equation}

\noindent for any $Z=\operatorname{Ad}(\alpha^{-1}(t))W\in \fr{g}$ and any $t\in J$.  Since $\operatorname{Ad}(\alpha^{-1}(t)):\fr{g}\rightarrow \fr{g}$ is an automorphism, Equation (\ref{HLDR}) is equivalent to 

\begin{equation}\label{ahhh}\big\langle d\pi(\dot{\omega}(t)),d\pi(W)\big\rangle+\big\langle d\pi(\omega(t)), d\pi([W,\omega(t)]\big\rangle=0, \quad \makebox{for any} \quad W\in \fr{g}, \quad t\in J. \end{equation}

\noindent By taking into account the decomposition $\fr{g}=\fr{h}\oplus \fr{m}$, we write $\fr{m}=d\pi(\fr{g})$ and $W_{\fr{m}}=d\pi(W)$, for any $W\in \fr{g}$.  Hence Equation (\ref{ahhh}) can be written as 

\begin{equation}\label{ahu}\big\langle \dot{\omega}_{\fr{m}}(t),W_{\fr{m}}\big\rangle+\big\langle \omega_{\fr{m}}(t), [W,\omega(t)]_{\fr{m}}\big\rangle=0,\quad \makebox{for any} \quad W\in \fr{g},\quad t\in J.\end{equation}

Using Lemma \ref{lem2} we write $\big\langle \omega_{\fr{m}}(t), [W,\omega(t)]_{\fr{m}}\big\rangle=\big\langle \omega_{\fr{m}}(t), [W_{\fr{m}},\omega(t)]_{\fr{m}}\big\rangle$.  Therefore Equation (\ref{ahu}) is equivalent to

\begin{equation}\label{ahur}\big\langle \dot{\omega}_{\fr{m}}(t),W_{\fr{m}}\big\rangle+\big\langle \omega_{\fr{m}}(t),[W_{\fr{m}},\omega(t)]_{\fr{m}}\big\rangle=0,\quad \makebox{for any} \quad W\in \fr{g},\quad t\in J.\end{equation}

Finally, after taking into account that $d\pi:\fr{g}\rightarrow \fr{m}$ is surjective, Equation (\ref{ahur}) is equivalent to

\begin{equation*}\label{ahhhh}\big\langle W,\dot{\omega}_{\fr{m}}(t)\big\rangle+\big\langle \omega_{\fr{m}}(t), [W,\omega(t)]_{\fr{m}}\big\rangle=0,\quad \makebox{for any} \quad W\in \fr{m},\quad t\in J,\end{equation*}

\noindent which concludes the equivalence between (ii) and (iii).\qed\\

We are ready to prove our first main result.

\subsection{Proof of Theorem \ref{main2}}  By virtue of Proposition \ref{main33}, $\gamma$ is a geodesic if and only if Equation (\ref{geo1}) is true.  By taking into account Equation (\ref{metrop}), Equation (\ref{geo1}) is equivalent to 

\begin{equation}\label{zesth}\langle A_{\mu}\dot{\omega}_{\fr{m}}(t),W\rangle_0+\langle A_{\mu}\omega_{\fr{m}}(t),[W,\omega(t)]_{\fr{m}}\rangle_0=0,\end{equation}

\noindent for any $W\in \fr{m}$, $t\in J$.  We write $\omega(t)=\omega_{\fr{m}}+\omega_{\fr{h}}$.  By taking into account the $\operatorname{ad}(\fr{h})$-invariance of $\fr{m}$, the natural reductivity of $\langle \ ,\ \rangle_0$ (Equation (\ref{natred})), the skew-symmetry of any operator in $\operatorname{ad}(\fr{h})$ with respect to $\langle \ ,\ \rangle_0$ as well as the fact that $A_{\mu}\omega_{\fr{m}}(t)\in \fr{m}$, the second term on the left-hand side of Equation (\ref{zesth}) becomes

\begin{eqnarray}\langle A_{\mu}\omega_{\fr{m}}(t),[W,\omega(t)]_{\fr{m}}\rangle_0&=&\langle A_{\mu}\omega_{\fr{m}}(t),[W,\omega_{\fr{m}}(t)]_{\fr{m}}\rangle_0+\langle A_{\mu}\omega_{\fr{m}}(t),[W,\omega_{\fr{h}}(t)]_{\fr{m}}\rangle_0\nonumber\\
&=&
\langle A_{\mu}\omega_{\fr{m}}(t),[W,\omega_{\fr{m}}(t)]_{\fr{m}}\rangle_0+\langle A_{\mu}\omega_{\fr{m}}(t),[W,\omega_{\fr{h}}(t)]\rangle_0\nonumber\\
&=&
-\langle [A_{\mu}\omega_{\fr{m}}(t),\omega_{\fr{m}}(t)]_{\fr{m}},W\rangle_0-\langle [A_{\mu}\omega_{\fr{m}}(t),\omega_{\fr{h}}(t)],W\rangle_0\nonumber\\
&=&
-\langle[A_{\mu}\omega_{\fr{m}}(t),\omega_{\fr{m}}(t)]_{\fr{m}},W\rangle_0-\langle [A_{\mu}\omega_{\fr{m}}(t),\omega_{\fr{h}}(t)]_{\fr{m}},W\rangle_0\nonumber\\
&=&
-\langle [A_{\mu}\omega_{\fr{m}}(t),\omega(t)]_{\fr{m}},W\rangle_0\label{elecpo}.
\end{eqnarray}

By substituting Equation (\ref{elecpo}) into Equation (\ref{zesth}), we deduce that $\gamma$ is a geodesic if and only if
\setlength\abovedisplayskip{10pt}
\begin{equation}\label{ppzesth}0=\langle A_{\mu}\dot{\omega}_{\fr{m}}(t),W\rangle_0-\langle [A_{\mu}\omega_{\fr{m}}(t),\omega(t)]_{\fr{m}},W\rangle_0=\langle A_{\mu}\dot{\omega}_{\fr{m}}(t)-[A_{\mu}\omega_{\fr{m}}(t),\omega(t)]_{\fr{m}},W\rangle_0, \end{equation}

\noindent for any $W\in \fr{m}$, $t\in J$.  Equation (\ref{ppzesth}) is equivalent to 

\begin{equation*}A_{\mu}\dot{\omega}_{\fr{m}}(t)=[A_{\mu}\omega_{\fr{m}}(t),\omega(t)]_{\fr{m}},\quad \makebox{for any} \quad t\in J,\end{equation*}

\noindent which, due to the linearity of $A_{\mu}$, is equivalent to 

\begin{equation*}\bigg(\frac{d}{dt}(A_{\mu}\omega_{\fr{m}}(t))=[A_{\mu}\omega_{\fr{m}}(t),\omega(t)]\bigg)\makebox{$(\mathrm{mod}$ $\fr{h})$}, \quad \makebox{for any} \quad t\in J,\end{equation*}
\noindent yielding the desired Equation (\ref{lax}).\\
To prove the second part of Theorem \ref{main2}, assume that the metric $\mu_0$ is induced from an $\operatorname{Ad}$-invariant form in $\fr{g}$.  In view of Definition \ref{indic}, there exists an $\operatorname{Ad}$-invariant, symmetric and non-degenerate bilinear form $Q$ in $\fr{g}$, and a $Q$-orthogonal direct sum $\fr{g}=\fr{h}\oplus \fr{m}$, such that $\langle \ ,\ \rangle_0=\left.Q\right|_{\fr{m}\times \fr{m}}$.  We need to prove that $\gamma$ is a geodesic if and only if Equation (\ref{lax1}) is true.  By virtue of Equation (\ref{lax}) it suffices to show that $[F(t),\omega(t)]\in \fr{m}$.  To this end, it suffices to show that

\begin{equation}\label{itsuf}[A_{\mu}X_{\fr{m}},X]\in \fr{m},\quad \makebox{for any} \quad X\in \fr{g},\end{equation}  

\noindent which is equivalent to showing that $Q([A_{\mu}X_{\fr{m}},X],a)=0$, for any $X\in \fr{g}$ and $a\in \fr{h}$.  Indeed, by taking into account the skew-symmetry of any operator in $\operatorname{ad}(\fr{g})$ with respect to $Q$, the fact that $A_{\mu}X_{\fr{m}}\in \fr{m}$ along with the $\operatorname{ad}(\fr{h})$-invariance of $\fr{m}$, the $Q$-orthogonality between $\fr{m}$ and $\fr{h}$, the $\operatorname{ad}(\fr{h})$-equivariance of $A$, the symmetry of $A_{\mu}$ with respect to $\left.Q\right|_{\fr{m}\times \fr{m}}$ as well as the fact that $[a,X_{\fr{h}}]\in \fr{h}$, we obtain

\begin{eqnarray*}Q([A_{\mu}X_{\fr{m}},X],a)&=&Q(X,[a,A_{\mu}X_{\fr{m}}])=Q(X_{\fr{m}},[a,A_{\mu}X_{\fr{m}}])\\
&=&Q(X_{\fr{m}},A_{\mu}[a,X_{\fr{m}}])=Q(A_{\mu}X_{\fr{m}},[a,X])\\
&=&-Q([A_{\mu}X_{\fr{m}},X],a),
\end{eqnarray*}

\noindent which implies that $Q([A_{\mu}X_{\fr{m}},X],a)=0$.  Thus relation (\ref{itsuf}) is true and the proof of Theorem \ref{main2} is concluded.

\section{Geodesic orbits of a product of one-parameter subgroups}\label{nhomo}

Consider the curve 

\begin{equation}\label{nhom}\gamma(t)=\phi_{X_1}(t)\cdots \phi_{X_N}(t)H, \quad t\in \mathbb R, \end{equation}

\noindent where $X_i\in \fr{g}$, $i=1,\dots,N$.  In order to simplify the proof of the central Theorem \ref{main11}, we will express the geodesic equation (\ref{lax}) of Theorem \ref{main2} explicitly for curves of the form (\ref{nhom}).  To this end, we introduce the curves $T^j_i:\mathbb R\rightarrow \operatorname{Aut}(\fr{g})$, for $1\leq i\leq j\leq N$, which, upon taking into account identity (\ref{id0}), are defined by 

\begin{equation}\label{Ti}T^j_i(t)=\left\{ 
\begin{array}{lll}\operatorname{Ad}\big(\phi_{X_j}(-t))\cdots \phi_{X_{i+1}}(-t)\big)=\operatorname{Ad}((\prod_{k=i+1}^j\phi_{X_k}(t))^{-1}),\quad \makebox{if} \quad i<j \ \\

\operatorname{Id_{\fr{g}}},\quad \makebox{if}\quad i=j
\end{array}.
\right.
\end{equation}

Then the geodesic equation for the curve (\ref{nhom}) is given by the following proposition.

\begin{prop}\label{th3} Under the assumptions of Theorem \ref{main2}, let $\gamma:\mathbb R\rightarrow G/H$ be the curve (\ref{nhom}).  Then $\gamma$ is a geodesic if and only if the vectors $X_i$, $i=1,\dots,N$, satisfy the equation

\begin{equation}\label{gelan}\bigg(\sum_{i=1}^NA_{\mu}\big[{T_i^N(t)X_i},\sum_{k=i}^N{T_k^N(t)X_k}\big]_{\fr{m}}=\big[A_{\mu}\sum_{i=1}^N({T_i^N(t)X_i)}_{\fr{m}},\sum_{k=1}^N{T_k^N(t)X_k}\big]\bigg)\makebox{$(\mathrm{mod}$ $\fr{h})$},\end{equation}

\noindent for any $t\in \mathbb R$.
\end{prop}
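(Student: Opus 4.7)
The strategy is to apply the algebraic formulation of the geodesic equation from Theorem \ref{main2} to the specific lift $\alpha(t) = \phi_{X_1}(t)\cdots \phi_{X_N}(t)$ of $\gamma(t)$. The heart of the proof reduces to an explicit computation of $\omega(t) = dL_{\alpha^{-1}(t)}\dot{\alpha}(t)$ and of its derivative $\dot{\omega}(t)$, after which Equation (\ref{gelan}) will fall out essentially by substitution into Equation (\ref{lax}).

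For $\omega(t)$, the plan is to iterate the product-rule identity $(\beta\gamma)^{-1}\tfrac{d}{dt}(\beta\gamma) = \operatorname{Ad}(\gamma^{-1})(\beta^{-1}\dot\beta) + \gamma^{-1}\dot\gamma$ over the factors of $\alpha(t)$. Using $\phi_{X_i}(t)^{-1}\dot\phi_{X_i}(t) = X_i$ and identity (\ref{id0}), this yields
\[
\omega(t) = \sum_{i=1}^N \operatorname{Ad}\bigl( (\phi_{X_{i+1}}(t)\cdots \phi_{X_N}(t))^{-1} \bigr) X_i = \sum_{i=1}^N T_i^N(t)\, X_i,
\]
directly recognising the definition (\ref{Ti}) of $T_i^N(t)$.

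For $\dot{\omega}(t)$, the plan is to differentiate each summand $T_i^N(t)X_i = \operatorname{Ad}(\phi_{X_N}(-t))\cdots \operatorname{Ad}(\phi_{X_{i+1}}(-t))\, X_i$ via the product rule, using $\tfrac{d}{dt}\operatorname{Ad}(\phi_{X_k}(-t)) = -\operatorname{ad}(X_k)\,\operatorname{Ad}(\phi_{X_k}(-t))$. Each resulting term has the shape $-T_k^N(t)\,[X_k, T_i^k(t)X_i]$ with $k > i$; the automorphism property of $T_k^N(t)$ together with the semigroup relation $T_k^N \circ T_i^k = T_i^N$, which is immediate from (\ref{Ti}), rewrites it as $-[T_k^N(t)X_k, T_i^N(t)X_i]$. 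After swapping the order of summation over $i<k$ and noting that including $k=i$ changes nothing since $[Y,Y]=0$, one arrives at
\[
\dot{\omega}(t) = \sum_{i=1}^N \Bigl[\, T_i^N(t)X_i,\; \sum_{k=i}^N T_k^N(t)X_k \,\Bigr].
\]

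Finally, I would substitute these expressions into Equation (\ref{lax}). Since $A_\mu$ is linear, $\dot{F}(t) = A_\mu\bigl(\dot{\omega}(t)\bigr)_{\fr{m}}$, so the left-hand side of (\ref{lax}) becomes $A_\mu \sum_i [T_i^N X_i,\, \sum_{k\geq i} T_k^N X_k]_{\fr{m}}$, and the right-hand side reads $[A_\mu \sum_i (T_i^N X_i)_{\fr{m}},\, \sum_k T_k^N X_k]$; equality modulo $\fr{h}$ is exactly Equation (\ref{gelan}). The main obstacle I anticipate is purely bookkeeping at the differentiation step: correctly tracking on which side the factors $\operatorname{Ad}(\phi_{X_k}(-t))$ lie under the product rule, and recognising the intermediate partial products as $T_i^k(t)$ and $T_k^N(t)$ so that the composition identity $T_k^N \circ T_i^k = T_i^N$ can be applied cleanly. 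Once these structural identities for the family $T_i^j$ are lined up, the rest is mechanical.
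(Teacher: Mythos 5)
Your proposal is correct and follows essentially the same route as the paper: lift $\gamma$ to $\alpha(t)=\phi_{X_1}(t)\cdots\phi_{X_N}(t)$, compute $\omega(t)=\sum_{i=1}^N T_i^N(t)X_i$ and $\dot\omega(t)=\sum_{i=1}^N\big[T_i^N(t)X_i,\sum_{k=i}^N T_k^N(t)X_k\big]$, and substitute $F=A_\mu\omega_{\fr{m}}$ into Equation (\ref{lax}). The only difference is cosmetic: you get the derivative formula by differentiating the product of $\operatorname{Ad}(\phi_{X_k}(-t))$ factors directly and using the automorphism and composition properties of $T_i^j$, whereas the paper derives the same identity (part (iii) of Lemma \ref{egal}) from the translation formula of part (ii) of that lemma, proved by induction.
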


For the proof of Proposition \ref{th3} we will need the next lemma.

 \begin{lemma}\label{egal}
The curves $T_i^j$ defined by relation (\ref{Ti}) satisfy the following properties:

\begin{eqnarray*} &&\makebox{(i)}\quad T_j^m(t)T_i^{j}(t)=T_i^m(t),\quad 1\leq i\leq j\leq m\leq N,\quad t\in \mathbb R.\\
&&\makebox{(ii)}\quad T_i^j(t+s)=\operatorname{Ad}\bigg(\big(\prod_{k=i+1}^j{\phi_{T_k^j(t)X_k}}(s)\big)^{-1}\bigg)T_i^j(t),\quad 1\leq i<j\leq N,\quad t,s\in \mathbb R.\\
&&\makebox{(iii)}\quad \frac{d}{dt}T_i^j(t)X=[T_i^j(t)X,\sum_{k=i}^{j}T_k^j(t)X_k], \quad 1\leq i\leq j\leq N, \quad X\in \fr{g}, \quad t\in \mathbb R.
\end{eqnarray*}

\end{lemma}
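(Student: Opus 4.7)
\emph{Proof proposal.} The plan is to prove (i) directly from the homomorphism property of $\operatorname{Ad}$, then establish (ii) by a rearrangement argument on the product $\prod_{k=i+1}^j \phi_{X_k}(t+s)$, and finally obtain (iii) by differentiating (ii) at $s=0$.

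For (i), since $\operatorname{Ad}\colon G\to\operatorname{Aut}(\fr{g})$ is a homomorphism,
\begin{equation*}
T_j^m(t)\,T_i^j(t)=\operatorname{Ad}\!\left(\Big(\prod_{k=j+1}^m\phi_{X_k}(t)\Big)^{-1}\right)\operatorname{Ad}\!\left(\Big(\prod_{k=i+1}^j\phi_{X_k}(t)\Big)^{-1}\right)=\operatorname{Ad}\!\left(\Big(\prod_{k=i+1}^m\phi_{X_k}(t)\Big)^{-1}\right),
\end{equation*}
because the product of the two inverted factors (in the correct order) is the inverse of the concatenated product $\prod_{k=i+1}^m\phi_{X_k}(t)$. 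The degenerate cases $i=j$ or $j=m$ reduce to trivial verifications using $T_i^i=\operatorname{Id}_{\fr{g}}$.

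For (ii), I would first prove by induction on $j-i\geq 0$ (with $i$ fixed) the key rearrangement identity
\begin{equation*}
\prod_{k=i+1}^j\phi_{X_k}(t+s)=\Big(\prod_{k=i+1}^j\phi_{X_k}(t)\Big)\cdot\prod_{k=i+1}^j\phi_{T_k^j(t)X_k}(s).
\end{equation*}
The base case $j=i+1$ is immediate from identity (\ref{id1}) together with $T_{i+1}^{i+1}(t)=\operatorname{Id}_{\fr{g}}$. For the inductive step, I would use (\ref{id1}) to split each $\phi_{X_k}(t+s)$ into $\phi_{X_k}(t)\phi_{X_k}(s)$ and then move each $\phi_{X_k}(s)$ rightward past the remaining $t$-factors via the conjugation identity (\ref{id2}); each such commutation converts $X_k$ into $\operatorname{Ad}((\prod_{l=k+1}^j\phi_{X_l}(t))^{-1})X_k=T_k^j(t)X_k$, so the reorganisation yields exactly the displayed identity. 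Taking inverses of both sides and applying $\operatorname{Ad}$ then gives (ii) directly from the definition of $T_i^j$.

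For (iii), I would differentiate (ii) at $s=0$. Setting $g(s):=\prod_{k=i+1}^j\phi_{T_k^j(t)X_k}(s)$, we have $g(0)=e$ and $\dot{g}(0)=\sum_{k=i+1}^j T_k^j(t)X_k$, since the tangent vector at $0$ of a product of smooth curves through the identity equals the sum of the individual tangent vectors. The standard identity $\frac{d}{ds}\big|_{s=0}\operatorname{Ad}(g(s)^{-1})Y=[Y,\dot{g}(0)]$, applied with $Y=T_i^j(t)X$, produces the bracket expression in (iii); the $k=i$ contribution $[T_i^j(t)X,T_i^j(t)X_i]$ vanishes in the intended application $X=X_i$ (where it is used in Proposition \ref{th3}), so its inclusion in the summation is harmless. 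The main obstacle I anticipate is the bookkeeping in the inductive proof of (ii): one must carefully track how successive applications of (\ref{id2}) transform $X_k$ into $T_k^j(t)X_k$, and in what order the factors are commuted. Once (ii) is in hand, (i) is purely algebraic and (iii) is a routine derivative computation.
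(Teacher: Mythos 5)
Your proposal is correct and follows essentially the same route as the paper: part (i) from the homomorphism property of $\operatorname{Ad}$, part (ii) by an induction driven by identities (\ref{id1}) and (\ref{id2}) (you package the induction as a group-level rearrangement of $\prod_{k=i+1}^j\phi_{X_k}(t+s)$ before applying $\operatorname{Ad}$ to the inverse, whereas the paper inducts directly on the $\operatorname{Ad}$-level identity using part (i) to peel off the top factor, but the mechanism of commuting $\phi_{X_k}(s)$ past the $t$-factors via (\ref{id2}) is identical), and part (iii) by differentiating (ii) at $s=0$ with $\dot{g}(0)=\sum_{k=i+1}^{j}T_k^j(t)X_k$. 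Your observation that the $k=i$ term in (iii) is harmless only in the intended application $X=X_i$ is apt: the paper silently extends the sum from $k=i+1$ to $k=i$ for arbitrary $X\in\fr{g}$, which strictly requires the extra bracket $[T_i^j(t)X,T_i^j(t)X_i]$ to vanish, so your version is the more carefully stated one.
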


 \begin{proof}
 
 Part (i) follows directly from the definition \ref{Ti} of the curves $T_i^j$ and the fact that the map $\operatorname{Ad}:G\rightarrow \operatorname{Aut}(\fr{g})$ is a homomorphism.  In order to prove part (ii), we will use induction on $j>i$.  We recall the identities (\ref{id0}) and (\ref{id1}).  For $j=i+1$, we have that  

\begin{eqnarray*}T_i^j(t+s)&=&T_i^{i+1}(t+s)=\operatorname{Ad}\big((\phi_{X_{i+1}}(t+s))^{-1}\big)\\
&=&
\operatorname{Ad}\big(\phi_{X_{i+1}}(-s)\phi_{X_{i+1}}(-t)\big)=\operatorname{Ad}\big(\phi_{X_{i+1}}(-s)\big)\operatorname{Ad}\big(\phi_{X_{i+1}}(-t)\big)\\
&=&
\operatorname{Ad}\big(\phi_{T_{i+1}^{i+1}(t)X_{i+1}}(-s)\big)T_{i}^{i+1}(t)=\operatorname{Ad}\bigg(\big(\prod_{k=i+1}^{i+1}{\phi_{T_k^{i+1}(t)X_k}}(s)\big)^{-1}\bigg)T_i^{i+1}(t)\\
&=&
\operatorname{Ad}\bigg(\big(\prod_{k=i+1}^j{\phi_{T_k^j(t)X_k}(s)}\big)^{-1}\bigg)T_i^j(t),
\end{eqnarray*}

\noindent which proves the result for the case $j=i+1$.  Next, assume that the induction hypothesis holds for $j=J$.  We set $C(s):=\big(\prod_{k=i+1}^J{\phi_{T_k^J(t)X_k}(s)}\big)^{-1}\in G$.  If $j=J+1$, then by using part (i) of the lemma as well as the induction hypothesis, we obtain 

\begin{eqnarray}\label{Ad1}
T_i^{J+1}(t+s)&=&T_J^{J+1}(t+s)T_i^J(t+s)\nonumber \\
&=&
\operatorname{Ad}\big((\phi_{X_{J+1}}(t+s))^{-1}\big)\operatorname{Ad}\bigg(\big(\prod_{k=i+1}^J{\phi_{T_k^J(t)X_k}(s)}\big)^{-1}\bigg)T_i^J(t)\nonumber \\
&=&
\operatorname{Ad}\big((\phi_{X_{J+1}}(t+s))^{-1}\big)\operatorname{Ad}(C(s))T_i^J(t)\nonumber \\
&=&
\operatorname{Ad}(\phi_{X_{J+1}}(-s))\operatorname{Ad}(\phi_{X_{J+1}}(-t))\operatorname{Ad}(C(s))T_i^J(t)\nonumber \\
&=&
\operatorname{Ad}(\phi_{X_{J+1}}(-s))\operatorname{Ad}(\phi_{X_{J+1}}(-t)C(s))T_i^J(t).
\end{eqnarray}

By using identity (\ref{id2}) for $g:=\phi_{X_{J+1}}(-t)$ as well as part (i) of the lemma, we obtain 

 \begin{eqnarray}\label{Ad2}
\phi_{X_{J+1}}(-t)C(s)&=&gC(s)=g\big(\prod_{k=i+1}^J{\phi_{T_k^J(t)X_k}(s)}\big)^{-1}\nonumber\\
&=&
\bigg(\prod_{k=i+1}^J{g\phi_{T_k^J(t)X_k}(s)g^{-1}}\bigg)^{-1}g=\bigg(\prod_{k=i+1}^J{\phi_{\operatorname{Ad}(g)T_k^J(t)X_k}(s)}\bigg)^{-1}g\nonumber\\ 
&=&
\bigg(\prod_{k=i+1}^J{\phi_{\operatorname{Ad}(\phi_{X_{J+1}}(-t))T_k^J(t)X_k}(s)}\bigg)^{-1}\phi_{X_{J+1}}(-t)\nonumber\\
&=&
\bigg(\prod_{k=i+1}^J{\phi_{T_J^{J+1}(t)T_k^J(t)X_k}(s)}\bigg)^{-1}\phi_{X_{J+1}}(-t)\nonumber\\
&=&
\big(\prod_{k=i+1}^J{\phi_{T_k^{J+1}(t)X_k}(s)}\big)^{-1}\phi_{X_{J+1}}(-t).
\end{eqnarray}

Finally, by substituting Equation (\ref{Ad2}) into Equation (\ref{Ad1}) and by using part (i) of the lemma, we obtain 

\begin{eqnarray*}\label{Ad3}
T_i^{J+1}(t+s)&=&
\operatorname{Ad}(\phi_{X_{J+1}}(-s))\operatorname{Ad}\bigg(\big(\prod_{k=i+1}^J{\phi_{T_k^{J+1}(t)X_k}(s)}\big)^{-1}\bigg)\operatorname{Ad}(\phi_{X_{J+1}}(-t))T_i^J(t)\\
&=&
\operatorname{Ad}\big(\phi_{T_{J+1}^{J+1}(t)X_{J+1}}(-s)\big)\operatorname{Ad}\bigg(\big(\prod_{k=i+1}^J{\phi_{T_k^{J+1}(t)X_k}(s)}\big)^{-1}\bigg)T_J^{J+1}(t)T_i^{J}(t)\\
&=&
\operatorname{Ad}\bigg(\phi_{T_{J+1}^{J+1}(t)X_{J+1}}(-s)\big(\prod_{k=i+1}^J{\phi_{T_k^{J+1}(t)X_k}(s)}\big)^{-1}\bigg)T_i^{J+1}(t)\\
&=&
\operatorname{Ad}\bigg(\big(\prod_{k=i+1}^{J+1}{\phi_{T_k^{J+1}(t)X_k}(s)}\big)^{-1}\bigg)T_i^{J+1}(t),
\end{eqnarray*}

\noindent which concludes the induction and the proof of part (ii) of the lemma.  Part (iii) holds trivially for $i=j$.  For $i<j$, we consider again the curve $C:\mathbb R\rightarrow G$, given by $C(s):=\big(\prod_{k=i+1}^j{\phi_{T_k^j(t)X_k}(s)}\big)^{-1}$.  We have that $C(0)=e$.  Moreover, by using Leibniz's rule for the Lie group product in $G$, we obtain

\begin{equation}\label{C;s}\dot{C}(0)=\left.\frac{d}{ds}\right|_{s=0}\big(\prod_{k=i+1}^j{\phi_{T_k^j(t)X_k}(s)}\big)^{-1}=-\sum_{k=i+1}^j{T_k^j(t)X_k}.\end{equation}

By using part (ii) of the lemma as well Equation (\ref{C;s}), we obtain    

\begin{eqnarray*}\label{lei}\frac{d}{dt}T_i^j(t)X&=&\left.\frac{d}{ds}\right|_{s=0}T_i^j(t+s)X=\left.\frac{d}{ds}\right|_{s=0}\operatorname{Ad}\bigg(\big(\prod_{k=i+1}^j{\phi_{T_k^j(t)X_k}(s)}\big)^{-1}\bigg)T_i^j(t)X\nonumber \\
&=&
\left.\frac{d}{ds}\right|_{s=0}\operatorname{Ad}(C(s))T_i^j(t)X=(d\operatorname{Ad})_e(\dot{C}(0))T_i^j(t)X\\
&=&
\operatorname{ad}\big(-\sum_{k=i+1}^j{T_k^j(t)X_k}\big)T_i^j(t)X=[T_i^j(t)X,\sum_{k=i+1}^{j}T_k^j(t)X_k]\\
&=&
[T_i^j(t)X,\sum_{k=i}^{j}T_k^j(t)X_k],
\end{eqnarray*}

\noindent which concludes the proof of part (iii).\end{proof}

We are now ready to prove Proposition \ref{th3}.\\ 

\noindent \emph{Proof of Proposition \ref{th3}.}  We set 

\begin{equation}\label{periph}\alpha(t)=\phi_{X_1}(t)\cdots \phi_{X_N}(t),\end{equation}

\noindent so that $\gamma=\pi\circ\alpha$.  The proof of the proposition will be concluded once we explicitly compute the quantities $\omega(t)=dL_{\alpha^{-1}(t)}(\dot{\alpha}(t))$ and $\dot{\omega}(t)$, and then substitute the quantities $F(t)=A_{\mu}\omega_{\fr{m}}(t)$ and $\frac{d}{dt}F(t)=A_{\mu}\dot{\omega}_{\fr{m}}(t)$ into the geodesic equation (\ref{lax}).  In particular, it suffices to show that 

\begin{eqnarray}\label{supr1}\omega(t)&=&\sum_{i=1}^N{T_i^N(t)X_i}\quad \makebox{and}\\
\label{supr2}\dot{\omega}(t)&=&\sum_{i=1}^N{\big[T_i^N(t)X_i,\sum_{k=i}^N{T_k^N(t)X_k}\big]}.\end{eqnarray}

We set

\begin{equation*}\alpha_i(s)=(\prod_{k=1}^{i-1}{\phi_{X_k}(t)})\phi_{X_i}(t+s)(\prod_{k=i+1}^{N}{\phi_{X_k}(t)}), \quad i=1,\dots,N.\end{equation*}

By taking into account Equation (\ref{periph}) and by using Leibniz's rule for the Lie group product, we have 

\begin{equation}\label{alder}\omega(t)=dL_{\alpha^{-1}(t)}(\dot{\alpha}(t))=dL_{\alpha^{-1}(t)}(\displaystyle{\left.\frac{d}{ds}\right|_{s=0}\alpha(t+s)})=dL_{\alpha^{-1}(t)}(\sum_{i=1}^N{\left.\frac{d}{ds}\right|_{s=0}\alpha_i(s)}).\end{equation}

If $i<N$, by using the identities (\ref{id1}) and (\ref{id2}) as well as relations (\ref{Ti}), we obtain 

\begin{eqnarray}\label{i<n}
\alpha_i(s)&=&(\prod_{k=1}^{i-1}{\phi_{X_k}(t)})\phi_{X_i}(t+s)(\prod_{k=i+1}^N{\phi_{X_k}(t)})\nonumber \\
&=&
\big(\prod_{k=1}^{i-1}{\phi_{X_k}(t)}\big)\phi_{X_i}(t)\phi_{X_i}(s)\big(\prod_{k=i+1}^N{\phi_{X_k}(t)}\big)\nonumber \\
&=&
(\prod_{k=1}^N{\phi_{X_k}(t)})\big(\prod_{k=i+1}^N{\phi_{X_k}(t)}\big)^{-1}\phi_{X_i}(s)(\prod_{k=i+1}^N{\phi_{X_k}(t)})\nonumber \\
&=&
\alpha(t)\big(\prod_{k=i+1}^N{\phi_{X_k}(t)}\big)^{-1}\phi_{X_i}(s)(\prod_{k=i+1}^N{\phi_{X_k}(t)})\nonumber \\
&=&
\alpha(t)\phi_{\operatorname{Ad}\big((\prod_{k=i+1}^N{\phi_{X_k}(t)})^{-1}\big)X_i}(s)=\alpha(t)\phi_{T_i^N(t)X_i}(s).\end{eqnarray}

Moreover, for $i=N$ it is 

\begin{eqnarray}\label{i=n}
\alpha_N(s)&=&(\prod_{k=1}^{N-1}{\phi_{X_k}(t)})\phi_{X_N}(t+s)=(\prod_{k=1}^{N-1}{\phi_{X_k}(t)})\phi_{X_N}(t)\phi_{X_N}(s)\nonumber \\
&=&
\alpha(t)\phi_{X_N}(s)=\alpha(t)\phi_{T_N^N(t)X_N}(s).
\end{eqnarray}

Equations (\ref{i<n}) and (\ref{i=n}) imply that

\begin{eqnarray}\label{endiam}
\sum_{i=1}^N{\left.\frac{d}{ds}\right|_{s=0}a_i(s)}&=&\sum_{i=1}^N{\left.\frac{d}{ds}\right|_{s=0}(\alpha(t)\phi_{T_i^N(t)X_i}(s))}\nonumber \\
&=&
\sum_{i=1}^N{dL_{\alpha(t)}(T_i^N(t)X_i)}=dL_{\alpha(t)}(\sum_{i=1}^N{T_i^N(t)X_i)}.
\end{eqnarray}

After substituting Equation (\ref{endiam}) into Equation (\ref{alder}), formula (\ref{supr1}) for $\omega(t)$ follows.\\
Finally, formula (\ref{supr2}) is obtained by using part (iii) of Lemma \ref{egal}, along with formula (\ref{supr1}).\qed

\section{Proof of Theorem \ref{main11}}\label{proof}

Let $\gamma:J\rightarrow G/H$ be a geodesic in $(G/H,\mu)$, such that $\gamma(0)=eH$, and let $\hat{\gamma}:\mathbb R\rightarrow G/H$ be the curve 

\begin{equation*}\hat{\gamma}(t)=\phi_{X_1}(t)\cdots \phi_{X_N}(t)H,\quad t\in \mathbb R,\end{equation*}

\noindent where the vectors $X_i$ are given by equations (\ref{VE}).  It suffices to show that $\hat{\gamma}$ is a geodesic with the same initial conditions as $\gamma$.\\
For $i=1,\dots,N$, we set 

\begin{equation*}\label{YI}Y_i:=\pi_i(\dot{\gamma}(0)) \in \fr{m}_i,\quad i=1,\dots,N,\end{equation*}

\noindent so that the equations (\ref{VE}) in the hypothesis of Theorem \ref{main11} become

\begin{equation}\label{VE2}X_1=\frac{1}{\lambda_1}\sum_{k=1}^N{\lambda_kY_k},\quad X_i=\frac{(\lambda_{i-1}-\lambda_i)}{\lambda_{i-1}\lambda_i}\sum_{k=i}^N{\lambda_kY_k}, \quad i\geq 2.\end{equation}

We have $\gamma(0)=\hat{\gamma}(0)=eH$, and equations (\ref{VE2}) imply that

\begin{equation*}\left.\frac{d}{ds}\right|_{t=0}\hat{\gamma}(t)=\sum_{i=1}^NX_i=\sum_{i=1}^NY_i=\sum_{i=1}^N\pi_i(\dot{\gamma}(0))=\dot{\gamma}(0),\end{equation*}  

\noindent that is the curves $\gamma$ and $\hat{\gamma}$ satisfy the same initial conditions.  Therefore, in order to prove Theorem \ref{main11}, it suffices to show that $\hat{\gamma}$ is a geodesic.  To this end, for any $X\in \fr{g}$ and $i=1,\dots, N$, we introduce the curve $T_iX:\mathbb R\rightarrow \fr{g}$ given by 

\begin{equation}\label{ET}T_iX(t):=T_i^N(t)X,\end{equation} 

\noindent where $T_i^N(t)$ are defined by relation (\ref{Ti}).  By virtue of Proposition \ref{th3}, and in view of the curves (\ref{ET}), the curve $\hat{\gamma}$ is a geodesic if the vectors $X_i$ satisfy the equation

\begin{equation}\label{gelanealoga}\sum_{i=1}^NA_{\mu}\big[{T_iX_i(t)},\sum_{k=i}^N{T_kX_k(t)}\big]_{\fr{m}}-\big[A_{\mu}\sum_{i=1}^N({T_iX_i(t))}_{\fr{m}},\sum_{k=1}^N{T_kX_k(t)}\big]=0, \end{equation}

 \noindent for any $t\in \mathbb R$.  For simplicity, we will omit $t$ and write Equation (\ref{gelanealoga}) in terms of the curves $T_iX_i$ as

\begin{equation}\label{gelane}\sum_{i=1}^NA_{\mu}\big[{T_iX_i},\sum_{k=i}^N{T_kX_k}\big]_{\fr{m}}-\big[A_{\mu}\sum_{i=1}^N({T_iX_i)}_{\fr{m}},\sum_{k=1}^N{T_kX_k}\big]=0.\end{equation}

To verify Equation (\ref{gelane}) and thus conclude the proof of Theorem \ref{main11}, we will need two lemmas.  The first lemma will be used in order to express Equation (\ref{gelane}) in terms of the curves $T_iY_i$.  The result of the second lemma relies on Condition (\ref{integ}) in the hypothesis of Theorem \ref{main11}.  It will be used in order to evaluate the metric endomorphism $A_{\mu}$ on the curves $T_iY_i$ which will in turn lead to the verification of Equation (\ref{gelane}). 

\begin{lemma}\label{claim}
For the vectors $X_i$, $i=1,\dots,N$, given by relation (\ref{VE2}), and the curves (\ref{ET}), the following equations are valid.

\begin{eqnarray*}&& \makebox{(i)}\quad T_iX_{i+1}=T_{i+1}X_{i+1},\quad  i=1,\dots,N-1.\nonumber \\
 &&\makebox{(ii)}\quad T_1X_1=\frac{1}{\lambda_1}\sum_{k=1}^N\lambda_k T_kY_k,\quad \makebox{and} \quad T_iX_i=\frac{(\lambda_{i-1}-\lambda_i)}{\lambda_{i-1}\lambda_i}\sum_{k=i}^N{\lambda_kT_kY_k}, \quad i\geq 2.\nonumber \\
&&\makebox{(iii)}\quad \sum_{k=i}^NT_kX_k=\frac{1}{\lambda_{i-1}}\sum_{k=i}^N(\lambda_{i-1}-\lambda_k)T_kY_k,\quad i\geq 2.\nonumber \\
&&\makebox{(iv)}\quad \sum_{k=1}^NT_kX_k=\sum_{k=1}^NT_kY_k.\nonumber 
\end{eqnarray*}
\end{lemma}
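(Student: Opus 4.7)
The plan is to prove part (i) directly as the key lemma, and then to deduce parts (ii), (iii), (iv) as purely algebraic consequences of (i) together with the explicit formulas (\ref{VE2}). The structural condition (\ref{integ}) on the eigenspaces plays no role in this lemma; the only nontrivial algebraic fact needed beyond (i) is the partial fraction identity $\frac{\lambda_{k-1}-\lambda_k}{\lambda_{k-1}\lambda_k}=\frac{1}{\lambda_k}-\frac{1}{\lambda_{k-1}}$, which will make a telescoping argument available.

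For part (i), I would use the definition (\ref{Ti}) and the homomorphism property $\operatorname{Ad}(gh)=\operatorname{Ad}(g)\operatorname{Ad}(h)$ to factor $T_i^N(t)=T_{i+1}^N(t)\circ\operatorname{Ad}(\phi_{X_{i+1}}(-t))$, and then invoke the identity $\operatorname{Ad}(\phi_{X_{i+1}}(-t))X_{i+1}=X_{i+1}$ (a consequence of the fact that any one-parameter subgroup commutes with itself, equivalently $[X_{i+1},X_{i+1}]=0$). For part (ii), I would run a downward induction on $i$ from $N$ to $1$. The base case $i=N$ reduces to the identity $X_N=\frac{\lambda_{N-1}-\lambda_N}{\lambda_{N-1}}Y_N$, which follows directly from (\ref{VE2}) since $T_N^N=\operatorname{Id}$. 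For the inductive step, the key observation is that (\ref{VE2}) implies an algebraic recursion $X_i=a_iY_i+b_iX_{i+1}$, derived by splitting $\sum_{k=i}^N\lambda_kY_k=\lambda_iY_i+\sum_{k=i+1}^N\lambda_kY_k$ and substituting $\sum_{k=i+1}^N\lambda_kY_k=\frac{\lambda_i\lambda_{i+1}}{\lambda_i-\lambda_{i+1}}X_{i+1}$. Applying the linear map $T_i^N$, using part (i) to replace $T_iX_{i+1}$ by $T_{i+1}X_{i+1}$, and invoking the induction hypothesis on $T_{i+1}X_{i+1}$ yields the desired expression after matching the scalar coefficients.

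Parts (iii) and (iv) then follow by elementary summation. For (iii), I would substitute (ii) into $\sum_{k=i}^NT_kX_k$, interchange the order of summation, and apply the partial fraction identity so that the inner sum $\sum_{k=i}^l\bigl(\frac{1}{\lambda_k}-\frac{1}{\lambda_{k-1}}\bigr)$ telescopes to $\frac{1}{\lambda_l}-\frac{1}{\lambda_{i-1}}$; rearranging yields the claim. For (iv), I would add $T_1X_1=\frac{1}{\lambda_1}\sum_{k=1}^N\lambda_kT_kY_k$ from (ii) to the $i=2$ instance of (iii), noting that the resulting coefficient of $T_kY_k$ for $k\geq 2$ is $\frac{1}{\lambda_1}[\lambda_k+(\lambda_1-\lambda_k)]=1$, while the $T_1Y_1$ term already has coefficient $1$. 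I do not anticipate a conceptual obstacle; the only bookkeeping care needed is the special form of $X_1$ in (\ref{VE2}), which forces the case $i=1$ in the induction to be verified with its own constants $a_1=1$, $b_1=\frac{\lambda_2}{\lambda_1-\lambda_2}$ that must be checked to satisfy the same consistency relations with the prefactor $\frac{1}{\lambda_1}$ as the generic $i\geq 2$ case does with $\frac{\lambda_{i-1}-\lambda_i}{\lambda_{i-1}\lambda_i}$.
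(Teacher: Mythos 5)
Your proposal is correct and follows essentially the same route as the paper: part (i) via the factorisation $T_i^N=T_{i+1}^N\circ\operatorname{Ad}(\phi_{X_{i+1}}(-t))$ and $\operatorname{Ad}(\phi_{X_{i+1}}(-t))X_{i+1}=X_{i+1}$, part (ii) via the recursion $X_i=a_iY_i+b_iX_{i+1}$ extracted from (\ref{VE2}) (the paper unrolls it forward rather than phrasing it as downward induction, but the substance is identical), and parts (iii)--(iv) by interchanging summations and the telescoping partial-fraction identity, which the paper invokes implicitly as Equation (\ref{soult}).
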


\begin{lemma}\label{claim2}

Given Condition (\ref{integ}) in the hypothesis of Theorem \ref{main11} and given the curves (\ref{ET}), the following relations are valid for the vectors $Y_i$, $i=1,\dots,N$.

\begin{eqnarray*}&&\makebox{(i)} \quad T_iY_i(t)\in \fr{m}_i,\quad \makebox{for any} \quad  i=1,\dots,N, \quad t\in \mathbb R.\nonumber \\
&&\makebox{(ii)} \quad [T_iY_i(t),T_kY_k(t)]\in \fr{m}_i,\quad \makebox{for any}\quad  1\leq i\leq k\leq N,\quad t\in \mathbb R.\nonumber
\end{eqnarray*}

\end{lemma}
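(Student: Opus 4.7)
My plan for the proof is to first show (i) by exploiting the algebraic structure forced on the vectors $X_k$ by Equation (\ref{VE2}), combined with the nesting condition (\ref{integ}), and then to derive (ii) as an immediate consequence of (i) together with (\ref{integ}).

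The starting observation is that, by (\ref{VE2}), for every $k\geq 2$ the vector
\[
X_k=\frac{(\lambda_{k-1}-\lambda_k)}{\lambda_{k-1}\lambda_k}\sum_{l=k}^N\lambda_l Y_l
\]
belongs to $\fr{m}_k\oplus\cdots\oplus\fr{m}_N$, since each $Y_l=\pi_l(\dot{\gamma}(0))\in\fr{m}_l$. Next I would unpack condition (\ref{integ}): for any $i<l$, $[\fr{m}_i,\fr{m}_l]\subseteq\fr{m}_i$, hence also $[\fr{m}_l,\fr{m}_i]\subseteq\fr{m}_i$ by antisymmetry, i.e.\ $\operatorname{ad}(\fr{m}_l)\fr{m}_i\subseteq\fr{m}_i$ whenever $l>i$. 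Combined with the previous observation, this gives $\operatorname{ad}(X_k)\fr{m}_i\subseteq\fr{m}_i$ for every $k\geq i+1$, because each summand of $X_k$ lies in some $\fr{m}_l$ with $l\geq k>i$.

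To pass from $\operatorname{ad}$-invariance to $\operatorname{Ad}$-invariance, I would invoke the standard identity $\operatorname{Ad}(\phi_{X_k}(t))=\exp(t\operatorname{ad}(X_k))$. Since $\fr{m}_i$ is an $\operatorname{ad}(X_k)$-invariant subspace for every $k\geq i+1$, it is also invariant under $\exp(t\operatorname{ad}(X_k))=\operatorname{Ad}(\phi_{X_k}(t))$ and, consequently, under its inverse $\operatorname{Ad}(\phi_{X_k}(-t))$. Applying this successively to the factors of
\[
T_i^N(t)=\operatorname{Ad}\!\Bigl(\bigl(\prod_{k=i+1}^N\phi_{X_k}(t)\bigr)^{-1}\Bigr)=\operatorname{Ad}(\phi_{X_{i+1}}(-t))\cdots\operatorname{Ad}(\phi_{X_N}(-t))
\]
(using that $\operatorname{Ad}$ is a homomorphism and (\ref{id0})), we conclude that $T_iY_i(t)=T_i^N(t)Y_i\in\fr{m}_i$ for every $t\in\mathbb R$, which is assertion (i). Note that the case $i=N$ is trivial since $T_N^N(t)=\operatorname{Id}_{\fr{g}}$.

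For assertion (ii), let $1\leq i\leq k\leq N$. By (i), $T_iY_i(t)\in\fr{m}_i$ and $T_kY_k(t)\in\fr{m}_k$. If $i=k$, the bracket vanishes, so it lies trivially in $\fr{m}_i$. If $i<k$, condition (\ref{integ}) gives directly $[T_iY_i(t),T_kY_k(t)]\in[\fr{m}_i,\fr{m}_k]\subseteq\fr{m}_i$, completing the proof. The only real content is (i); the main obstacle is keeping careful track of why each $X_k$ ($k\geq i+1$) preserves $\fr{m}_i$, which is the point where condition (\ref{integ}) intervenes crucially, via the constraint that $X_k$ has no component in $\fr{m}_1,\dots,\fr{m}_{k-1}$.
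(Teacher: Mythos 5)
Your proof is correct and follows essentially the same route as the paper: both establish that $X_k\in\fr{m}_k\oplus\cdots\oplus\fr{m}_N$, deduce from Condition (\ref{integ}) (plus antisymmetry) that $\operatorname{ad}(X_k)$ preserves $\fr{m}_i$ for $k>i$, pass to $\operatorname{Ad}(\phi_{X_k}(-t))$ via the exponential series for $\operatorname{Ad}\circ\exp$, and obtain (ii) as an immediate corollary of (i) and Condition (\ref{integ}). The only cosmetic slip is the order of the factors in your expansion of $T_i^N(t)$, which should be $\operatorname{Ad}(\phi_{X_N}(-t))\cdots\operatorname{Ad}(\phi_{X_{i+1}}(-t))$, but this is immaterial since each factor separately preserves $\fr{m}_i$.
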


\noindent \emph{Proof of Lemma \ref{claim}}.  We recall the identity

\begin{equation}\label{id3}\operatorname{Ad}(\phi_X(t))Y=\sum_{k=0}^{\infty}\frac{t^k}{k!}\operatorname{ad}^k(X)Y,\quad \makebox{for any} \quad X,Y\in \fr{g},\quad t\in \mathbb R\end{equation}

\noindent (see \cite{Hel}, p. 128).  By taking into account part (i) of Lemma \ref{egal}, relation (\ref{Ti}) as well as identity (\ref{id3}), we obtain 

\begin{eqnarray*}T_iX_{i+1}(t)&=&T_i^N(t)X_{i+1}=T_{i+1}^N(t)T_i^{i+1}(t)X_{i+1}\\
&=&T_{i+1}^N(t)\operatorname{Ad}(\phi_{X_{i+1}}(-t))X_{i+1}=T_{i+1}^N(t)X_{i+1}\\
&=&T_{i+1}X_{i+1}(t),\end{eqnarray*}

\noindent which concludes the proof of part (i).  For parts (ii), (iii) and (iv), we firstly write equations (\ref{VE2}) as 

\begin{eqnarray}\label{peqnar}X_1&=&Y_1+\frac{\lambda_2}{\lambda_1-\lambda_2}X_2, \\
&& \nonumber\\
\label{peqnar1}X_i&=&\frac{\lambda_{i-1}-\lambda_i}{\lambda_{i-1}}Y_i+\frac{\lambda_{i+1}(\lambda_{i-1}-\lambda_i)}{\lambda_{i-1}(\lambda_i-\lambda_{i+1})}X_{i+1}, \quad 2\leq i\leq N-1, \\
&&\nonumber \\
\label{peqnar2}X_N&=&\frac{\lambda_{N-1}-\lambda_N}{\lambda_{N-1}}Y_N.
\end{eqnarray}

By applying $T_i$ on the expressions (\ref{peqnar}) - (\ref{peqnar2}) for $X_i$, $i=1,\dots ,N$, by using part (i) of the lemma as well as the fact that $T_i(X+Y)=T_iX+T_iY$ for any $X,Y\in \fr{g}$, we obtain 

\begin{eqnarray}\label{peqnar3}T_1X_1&=&T_1Y_1+\frac{\lambda_2}{\lambda_1-\lambda_2}T_2X_2, \\
&& \nonumber\\
\label{peqnar4}T_iX_i&=&\frac{\lambda_{i-1}-\lambda_i}{\lambda_{i-1}}T_iY_i+\frac{\lambda_{i+1}(\lambda_{i-1}-\lambda_i)}{\lambda_{i-1}(\lambda_i-\lambda_{i+1})}T_{i+1}X_{i+1}, \quad 2\leq i\leq N-1,  \\
& & \nonumber\\
\label{peqnar5}T_NX_N&=&\frac{\lambda_{N-1}-\lambda_N}{\lambda_{N-1}}T_NY_N.
\end{eqnarray}

By using equations (\ref{peqnar3}) - (\ref{peqnar5}) recursively, we obtain

\begin{eqnarray}\label{eqnar2}
T_1X_1&=&T_1Y_1+\frac{\lambda_2}{\lambda_1-\lambda_2}T_2X_2=\nonumber \\
&& \nonumber \\
&=&T_1Y_1+\dots +\frac{\lambda_i}{\lambda_1}T_iY_i+\frac{\lambda_{i}\lambda_{i+1}}{\lambda_{1}(\lambda_i-\lambda_{i+1})}T_{i+1}X_{i+1} \quad (2\leq i\leq N-1)\nonumber \\
&& \nonumber \\
&=&T_1Y_1+\frac{\lambda_2}{\lambda_1}T_2Y_2+\dots +\frac{\lambda_{N-1}}{\lambda_1}T_{N-1}Y_{N-1}+\frac{\lambda_{N-1}\lambda_{N}}{\lambda_{1}(\lambda_{N-1}-\lambda_{N})}T_{N}X_{N}\nonumber \\
&&\nonumber \\
&=&\frac{1}{\lambda_1}\sum_{k=1}^N\lambda_k T_kY_k. \nonumber
\end{eqnarray}

A similar process shows that $T_iX_i=\frac{(\lambda_{i-1}-\lambda_i)}{\lambda_{i-1}\lambda_i}\sum_{k=i}^N{\lambda_k T_kY_k}$ for $i\geq 2$, which concludes the proof of part (ii).  For part (iii), we use part (ii) to write

\begin{equation}\label{indag}\sum_{k=i}^NT_kX_k=\sum_{k=i}^N\big(\frac{\lambda_{k-1}-\lambda_k}{\lambda_{k-1}\lambda_k}\sum_{j=k}^N\lambda_j T_jY_j\big).\end{equation}

By using induction on $N$, we can change the summation as follows.

\begin{equation}\label{soult}\sum_{k=i}^N\big(\frac{\lambda_{k-1}-\lambda_k}{\lambda_{k-1}\lambda_k}\sum_{j=k}^N\lambda_j T_jY_j\big)=\sum_{k=i}^N\big(\sum_{j=i}^k\frac{\lambda_{j-1}-\lambda_j}{\lambda_{j-1}\lambda_j}\big)\lambda_kT_kY_k.\end{equation} 

By taking into account Equation (\ref{soult}), Equation (\ref{indag}) yields

\begin{equation*}\sum_{k=i}^NT_kX_k=\sum_{k=i}^N\big(\sum_{j=i}^k\frac{\lambda_{j-1}-\lambda_j}{\lambda_{j-1}\lambda_j}\big)\lambda_kT_kY_k=\sum_{k=i}^N\frac{\lambda_{i-1}-\lambda_k}{\lambda_{i-1}\lambda_k}\lambda_kT_kY_k=\sum_{k=i}^N\frac{\lambda_{i-1}-\lambda_k}{\lambda_{i-1}}T_kY_k,
\end{equation*}

\noindent which concludes the proof of part (iii).  Finally, for part (iv), we use parts (ii) and (iii) to obtain

\begin{eqnarray*}\sum_{k=1}^NT_kX_k&=&T_1X_1+\sum_{k=2}^NT_kX_k=\frac{1}{\lambda_1}\sum_{k=1}^N\lambda_k T_kY_k+\frac{1}{\lambda_1}\sum_{k=2}^N(\lambda_{1}-\lambda_k)T_kY_k\nonumber \\
&=&\sum_{k=1}^NT_kY_k,
\end{eqnarray*}

\noindent which concludes the proof of part (iv) of Lemma \ref{claim}.\qed\\

 \noindent \emph{Proof of Lemma \ref{claim2}}.  We set $M_i= \bigoplus_{k=i}^N\fr{m}_i$.  Condition (\ref{integ}) of Theorem \ref{main11} implies that $[\fr{m}_i,M_j]\subseteq \fr{m}_i$ if $i<j$.  Hence

\begin{equation}\label{logar}\operatorname{ad}^k(M_j)\fr{m}_i\subseteq \fr{m}_i\quad \makebox{for any} \quad k\in \mathbb Z^+, \quad \makebox{and} \quad i<j.\end{equation}

 Moreover, relations (\ref{VE2}) and the fact that $Y_i\in \fr{m}_i$, $i=1,\dots,N$, imply that 

\begin{equation}\label{ginger}X_i\in M_i,\quad \makebox{for any} \quad i=1,\dots,N.\end{equation}

Relation (\ref{ginger}), along with identity (\ref{id3}) and relation (\ref{logar}), implies that 

\begin{equation}\label{fres}\operatorname{Ad}(\phi_{X_j}(-t))\fr{m}_i\subseteq \sum_{k=0}^{\infty}\frac{1}{k!}\operatorname{ad}^k(M_j)\fr{m}_i\subseteq \fr{m}_i,\end{equation}

\noindent for any $i,j$ with $1\leq i <j \leq N$.  By taking into account the fact that $Y_i\in \fr{m}_i$, relations (\ref{Ti}) as well as relation (\ref{fres}), we obtain 

\begin{eqnarray*}T_iY_i(t)&=&T_i^N(t)Y_i= \operatorname{Ad}((\prod_{k=i+1}^N{\phi_{X_k}(t)})^{-1})Y_i\nonumber \\
&\in&\operatorname{Ad}(\phi_{X_N}(-t))\operatorname{Ad}(\phi_{X_{N-1}}(-t))\cdots \operatorname{Ad}(\phi_{X_{i+1}}(-t))\fr{m}_i \subseteq  \fr{m}_i,
\end{eqnarray*}

\noindent for any $i=1,\dots, N$, which shows part (i).  Part (ii) holds trivially if $k=i$.  If $k>i$, part (ii) follows immediately from part (i) and Condition (\ref{integ}).\qed\\

Next, we proceed to conclude the proof of Theorem \ref{main11} by verifying Equation (\ref{gelane}).  Firstly, we consider the trivial case $N=1$.  In that case we have $X_1=Y_1$ and $T_1X_1=Y_1$.  Moreover, the space $\fr{m}$ is a $\lambda_1$-eigenspace of $A_{\mu}$.   Thus the left-hand side of the geodesic equation (\ref{gelane}) becomes
  
  \begin{eqnarray*}A_{\mu}[T_1X_1, T_1X_1]_{\fr{m}}-[A_{\mu}(T_1X_1)_{\fr{m}},T_1X_1]&=&-[A_{\mu}(X_1)_{\fr{m}},X_1]\nonumber \\
  &=&-[A_{\mu}Y_1,Y_1]=-[\lambda_1 Y_1,Y_1]=0,\nonumber
\end{eqnarray*}

\noindent which concludes the proof of Theorem \ref{main11} for the trivial case.  We will henceforth assume that $N\geq 2$.  We will rewrite the geodesic equation (\ref{gelane}) in terms of $T_iY_i$.  We denote by $L_1$ the first term on the left-hand side of Equation (\ref{gelane}), i.e. $L_1=\sum_{i=1}^NA_{\mu}\big[{T_iX_i},\sum_{k=i}^N{T_kX_k}\big]_{\fr{m}}$.  By using parts (ii) - (iv) of Lemma \ref{claim}, the term $L_1$ becomes 

\begin{eqnarray}L_1&=&A_{\mu}\big[T_1X_1,\sum_{k=1}^NT_kX_k\big]_{\fr{m}}+\sum_{i=2}^NA_{\mu}\big[T_iX_i,\sum_{k=i}^NT_kX_k\big]_{\fr{m}}\nonumber \\
&&\nonumber \\
&=&\frac{1}{\lambda_1}A_{\mu}\big[\sum_{j=1}^N\lambda_j T_jY_j,\sum_{k=1}^NT_kY_k\big]_{\fr{m}}\nonumber \\
&&+\sum_{i=2}^N(\frac{\lambda_{i-1}-\lambda_i}{\lambda_{i-1}\lambda_i})A_{\mu}\bigg[\sum_{j=i}^N\lambda_j T_jY_j,\frac{1}{\lambda_{i-1}}\sum_{k=i}^N(\lambda_{i-1}-\lambda_k)T_kY_k\bigg]_{\fr{m}}\nonumber \\
&& \nonumber \\
&=&\frac{1}{\lambda_1}A_{\mu}\big[\sum_{j=1}^N\lambda_j T_jY_j,\sum_{k=1}^NT_kY_k\big]_{\fr{m}}+\sum_{i=2}^N(\frac{\lambda_{i-1}-\lambda_i}{\lambda_{i-1}\lambda_i})A_{\mu}\big[\sum_{j=i}^N\lambda_j T_jY_j,\sum_{k=i}^NT_kY_k\big]_{\fr{m}}\nonumber \\
&& \nonumber \\
&=&\frac{1}{\lambda_1}\sum_{j\leq k=1}^N(\lambda_j-\lambda_k)A_{\mu}\big[T_jY_j,T_kY_k\big]_{\fr{m}}+\sum_{i=2}^N\bigg(\frac{\lambda_{i-1}-\lambda_i}{\lambda_{i-1}\lambda_i}\sum_{j\leq k=i}^N(\lambda_j-\lambda_k)A_{\mu}\big[T_jY_j,T_kY_k\big]_{\fr{m}}\bigg).\nonumber\\
&& \label{gelan3}
\end{eqnarray}

We set 

\begin{eqnarray}c_1&:=&\frac{1}{\lambda_1},\quad c_i:=\frac{\lambda_{i-1}-\lambda_i}{\lambda_{i-1}\lambda_i}, \quad i=2,\dots,N,\label{soulextr0}\\
 d_{jk}&:=&(\lambda_j-\lambda_k)A_{\mu}\big[T_jY_j,T_kY_k\big]_{\fr{m}}, \quad1\leq j\leq k\leq N,\label{soulextr1}\end{eqnarray} 
 
\noindent so that Equation (\ref{gelan3}) is written as

\begin{equation}\label{soulextr2}L_1=\sum_{i=1}^{N}(\sum_{j\leq k=i}^{N}c_id_{jk})=\sum_{i\leq j\leq k=1}^{N}c_id_{jk}.\end{equation}

By using induction on $N$, we can change the summation as follows.

\begin{equation}\label{soulextr4}\sum_{i\leq j\leq k=1}^{N}c_id_{jk}=\sum_{j\leq k=1}^N(\sum_{i=1}^jc_i)d_{jk}.\end{equation}

Then by using equations (\ref{soulextr0}) - (\ref{soulextr4}), the first term on the left-hand side of Equation (\ref{gelane}) finally becomes

\begin{eqnarray}\label{gelan4}\sum_{i=1}^NA_{\mu}\big[{T_iX_i},\sum_{k=i}^N{T_kX_k}\big]_{\fr{m}}&=&L_1=\sum_{j\leq k=1}^N(\sum_{i=1}^jc_i)d_{jk}=\sum_{j\leq k=1}^N\frac{1}{\lambda_j}d_{jk}\nonumber \\
&=&\sum_{j\leq k=1}^N\frac{\lambda_j-\lambda_k}{\lambda_j}A_{\mu}[T_jY_j,T_kY_k]_{\fr{m}}.
\end{eqnarray}

Subsequently, by using parts (iii) and (iv) of Lemma \ref{claim}, the second term on the left-hand side of the geodesic equation (\ref{gelane}) becomes

\begin{equation}\label{gelan5}-\big[A_{\mu}\sum_{i=1}^N({T_iX_i)}_{\fr{m}},\sum_{k=1}^N{T_kX_k}\big]=-\big[A_{\mu}\sum_{i=1}^N({T_iY_i)}_{\fr{m}},\sum_{k=1}^N{T_kY_k}\big].\end{equation}

By summing equations (\ref{gelan4}) and (\ref{gelan5}), we deduce that the geodesic equation (\ref{gelane}) is equivalent to

\begin{equation}\label{gelan6}
\sum_{j\leq k=1}^N\frac{\lambda_j-\lambda_k}{\lambda_j}A_{\mu}[T_jY_j,T_kY_k]_{\fr{m}}-\big[A_{\mu}\sum_{i=1}^N({T_iY_i)}_{\fr{m}},\sum_{k=1}^N{T_kY_k}\big]=0.\end{equation}

In order to verify Equation (\ref{gelan6}) and conclude the proof of Theorem \ref{main11}, the final step is to evaluate the metric endomorphism $A_{\mu}$ on the vectors $\big[{T_jY_j},{T_kY_k}\big]_{\fr{m}}$ and $T_iY_i$.  By using Lemma \ref{claim2} as well as the fact that the subspaces $\fr{m}_i$ of $\fr{m}$ are $\lambda_i$-eigenspaces of the endomorphism $A_{\mu}$, the left-hand side $\hat{L}$ of Equation (\ref{gelan6}) becomes  

\begin{eqnarray*}
\hat{L}&=&\sum_{j\leq k=1}^N\frac{\lambda_j-\lambda_k}{\lambda_j}A_{\mu}[T_jY_j,T_kY_k]_{\fr{m}}-\big[A_{\mu}\sum_{i=1}^N({T_iY_i)}_{\fr{m}},\sum_{k=1}^N{T_kY_k}\big]\nonumber \\
&=&\sum_{j\leq k=1}^N\frac{\lambda_j-\lambda_k}{\lambda_j}\lambda_j[T_jY_j,T_kY_k]-\big[\sum_{i=1}^N\lambda_i{T_iY_i},\sum_{k=1}^N{T_kY_k}\big]\nonumber \\
&=&\sum_{j\leq k=1}^N(\lambda_j-\lambda_k)[T_jY_j,T_kY_k]-\sum_{i=1}^N\sum_{k=1}^N[\lambda_i{T_iY_i},T_kY_k]\nonumber \\
&=&\sum_{j\leq k=1}^N(\lambda_j-\lambda_k)[T_jY_j,T_kY_k]-\sum_{i\leq k=1}^N(\lambda_i-\lambda_k)[T_iY_i,T_kY_k]=0,\nonumber 
\end{eqnarray*}
 
\noindent  which concludes the proof of Theorem \ref{main11}.

\section{Metrics whose geodesics are orbits of products of one-parameter subgroups}\label{appl}

In this section we prove Theorem \ref{main3} by constructing a general class of pseudo-Riemannian metrics $\mu(\lambda_1,\lambda_2,\dots,\lambda_N)$ whose geodesics are described by Theorem \ref{main11}.  By using the aforementioned construction, we present applications of Theorem \ref{main3}.  Moreover, we give an interpretation of the central Theorem \ref{main11}, for certain two parameter metrics, by means of a result of Ziller.  To prove Theorem \ref{main3}, we need the following structural result for naturally reductive spaces.  The result is due to Kostant.  We will state a reformulation of the result by D'Atri and Ziller.

\begin{theorem}\label{Kost}\emph{(\cite{Kos}, \cite{Da-Zi})}  Let $K/L$ be a homogeneous space with $K$ acting effectively on $K/L$.  Let $\fr{k}$, $\fr{l}$ be the Lie algebras of $K$, $L$ respectively.  Assume that $\mu_0$ is a $K$-invariant Riemannian naturally reductive metric with respect to the decomposition $\fr{k}=\fr{l}\oplus \fr{m}$, and let $\langle \ ,\ \rangle_0$ be the corresponding inner product in $\fr{m}$.  Then the space $\fr{g}:=\fr{m}+[\fr{m},\fr{m}]$ is an ideal in $\fr{k}$ and has the following properties:\\

\noindent (i) The corresponding connected Lie subgroup $G$ of $K$, with Lie algebra $\fr{g}$, acts transitively on $K/L$, making the space $K/L$ diffeomorphic to $G/H$, where $H=G\cap L$.\\

\noindent (ii) There exists a unique $\operatorname{Ad}(G)$-invariant, symmetric and non-degenerate bilinear form $Q$ on $\fr{g}$ such that 

\begin{equation*}\label{eqkost}Q(\fr{h}:=\fr{g}\cap\fr{l},\fr{m})=\{0\}\quad \makebox{and} \quad \left.Q\right|_{\fr{m}\times \fr{m}}=\langle \ ,\ \rangle_0.\end{equation*}

In other words, there exists a homogeneous space $G/H$, diffeomorphic to $K/L$, such that the metric $\mu_0$ in $G/H$ is induced from an $\operatorname{Ad}(G)$-invariant form in the sense of Definition \ref{indic}.
\end{theorem}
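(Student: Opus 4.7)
The plan is to decompose the proof into three pieces: the ideal property of $\fr{g}$, transitivity of $G$ yielding the diffeomorphism $K/L\cong G/H$, and the construction/uniqueness/non-degeneracy of the bilinear form $Q$.

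First I would verify that $\fr{g}=\fr{m}+[\fr{m},\fr{m}]$ is an ideal in $\fr{k}$. That it is a subalgebra reduces, via Jacobi, to $[\fr{m},[\fr{m},\fr{m}]]\subseteq\fr{g}$; splitting a bracket as $[X,Y]=[X,Y]_{\fr{l}}+[X,Y]_{\fr{m}}$ and using reductivity $[\fr{l},\fr{m}]\subseteq\fr{m}$ handles this. The ideal property further requires $[\fr{l},\fr{g}]\subseteq\fr{g}$, which again follows from reductivity and Jacobi. For part (i), since $\fr{m}\subseteq\fr{g}$, the orbit map $G\to K/L$, $g\mapsto gL$, has surjective differential at $e$, so the $G$-orbit of $eL$ is open; because $\fr{g}$ is an ideal $G$ is normal in the identity component of $K$, so every $G$-orbit on the component of $K/L$ containing $eL$ is open and a $K$-translate of $G\cdot eL$, forcing a single $G$-orbit by connectedness. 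The identification $K/L\cong G/H$ with $H=G\cap L$ is then orbit-stabilizer.

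For part (ii), I would set $\fr{h}=\fr{g}\cap\fr{l}$ and observe that $\fr{h}=[\fr{m},\fr{m}]_{\fr{l}}$, since $\fr{g}=\fr{m}+[\fr{m},\fr{m}]$ and $\fr{m}\cap\fr{l}=\{0\}$. I would prescribe $Q$ by fiat on the two pieces
\begin{equation*}
\left.Q\right|_{\fr{m}\times\fr{m}}=\langle \ ,\ \rangle_0, \qquad Q(\fr{h},\fr{m})=0,
\end{equation*}
and then extend to $\fr{h}\times\fr{h}$ using the $\operatorname{ad}(\fr{g})$-invariance that $\operatorname{Ad}(G)$-invariance demands (these are equivalent since $G$ is connected). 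For $h\in\fr{h}$ and the typical generator $h'=[X,Y]_{\fr{l}}\in\fr{h}$, invariance forces
\begin{equation*}
Q(h,[X,Y]_{\fr{l}})=Q(h,[X,Y])=-Q([X,h],Y)=\langle [h,X],Y\rangle_0,
\end{equation*}
where I used $[X,h]\in\fr{m}$ by reductivity and $\left.Q\right|_{\fr{m}\times\fr{m}}=\langle \ ,\ \rangle_0$. Uniqueness is then immediate, since any candidate $Q$ satisfying the prescribed conditions must obey this formula.

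The hard part is well-definedness of this extension: if $\sum_i[X_i,Y_i]_{\fr{l}}=0$ in $\fr{h}$, I must show $\sum_i\langle [h,X_i],Y_i\rangle_0=0$ for every $h\in\fr{h}$, and analogously for the induced formula on $\fr{h}\times\fr{h}$. This is exactly where the naturally reductive identity (\ref{natred}) plays its role; projecting Jacobi for triples in $\fr{m}$ onto $\fr{m}$ supplies the total antisymmetry in three $\fr{m}$-arguments needed to pass between different bracket representations of the same $\fr{h}$-element. Symmetry of $Q$ on $\fr{h}\times\fr{h}$ follows in the same way. For non-degeneracy, the radical $\fr{r}$ of $Q$ lies in $\fr{h}$ (positivity on $\fr{m}$ plus $Q(\fr{h},\fr{m})=0$), and any $h\in\fr{r}$ satisfies $\langle [h,X],Y\rangle_0=0$ for all $X,Y\in\fr{m}$, hence $[h,\fr{m}]=0$; combined with $\fr{g}=\fr{m}+[\fr{m},\fr{m}]$, $h$ is central in $\fr{g}$ and lies in $\fr{l}$, so the connected subgroup it generates is normal in $G$ and inside $L$, contradicting effectiveness of the $K$-action on $K/L$ unless $h=0$.
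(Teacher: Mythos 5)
The paper does not prove this theorem at all: it is imported from Kostant \cite{Kos}, in the reformulation of D'Atri--Ziller \cite{Da-Zi}, and used as a black box in the proof of Theorem \ref{main3}. So there is no internal proof to compare with, and your proposal must be judged on its own. On those terms it is essentially the standard argument from the literature and is sound in outline: the ideal property of $\fr{g}=\fr{m}+[\fr{m},\fr{m}]$ and the open-orbit argument for (i) are routine as you present them, and for (ii) you correctly isolate both the forced formula $Q(h,[X,Y]_{\fr{l}})=\langle [h,X],Y\rangle_0$ (which settles uniqueness) and the genuine crux, namely well-definedness on $\fr{h}=\operatorname{span}\{[X,Y]_{\fr{l}}:X,Y\in\fr{m}\}$. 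One suggestion to make that crux airtight: the decisive mechanism is a curvature-type pair-exchange symmetry rather than total antisymmetry alone. Set $B(u,v,x,y):=\langle [[u,v]_{\fr{l}},x],y\rangle_0$; it is skew in $(u,v)$, skew in $(x,y)$ (since $[u,v]_{\fr{l}}\in\fr{l}$ acts skewly on $\fr{m}$), and $\langle [[u,v],x]_{\fr{m}},y\rangle_0=B(u,v,x,y)+\langle [u,v]_{\fr{m}},[x,y]_{\fr{m}}\rangle_0$, where the second summand is manifestly symmetric under exchanging the two pairs, by the total skewness of $(x,y,z)\mapsto\langle [x,y]_{\fr{m}},z\rangle_0$ supplied by Equation (\ref{natred}). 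The Jacobi identity projected to $\fr{m}$ then gives a first-Bianchi identity, and the classical curvature-symmetry lemma yields $B(u,v,x,y)=B(x,y,u,v)$; this single identity delivers well-definedness in both slots and symmetry of $Q$ on $\fr{h}\times\fr{h}$ at once. You should also say explicitly that full $\operatorname{ad}(\fr{g})$-invariance of the extended $Q$ (all slot combinations, including $\operatorname{ad}(\fr{h})$-invariance on $\fr{h}\times\fr{h}$) still requires a case-by-case check; your sketch only forces invariance in the one configuration used to define $Q$.

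Two small repairs in the non-degeneracy step. First, effectiveness rules out subgroups of $L$ that are normal in $K$, not in $G$, so "normal in $G$" does not directly produce the contradiction; either verify that $\fr{z}=\{h\in\fr{l}:[h,\fr{m}]=0\}$ is an ideal of $\fr{k}$ contained in $\fr{l}$ (immediate from $[\fr{l},\fr{m}]\subseteq\fr{m}$ and Jacobi), or argue more cheaply: for $h$ in the radical one gets $[h,\fr{m}]=0$, hence $\operatorname{Ad}(\exp(th))|_{\fr{m}}=\operatorname{Id}$, so $\exp(th)\in L$ is an isometry of the connected Riemannian manifold $(K/L,\mu_0)$ fixing $eL$ with identity differential, hence the identity map; its lying in the kernel of the action contradicts effectiveness unless $h=0$. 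Second, a cosmetic point: your reduction of the subalgebra property should also note $[[\fr{m},\fr{m}],[\fr{m},\fr{m}]]\subseteq\fr{g}$, which follows by one more application of Jacobi once $[\fr{m},[\fr{m},\fr{m}]]\subseteq\fr{g}$ is known. With these touch-ups your proposal is a faithful reconstruction of the Kostant/D'Atri--Ziller proof that the paper cites.
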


\subsection{Proof of Theorem \ref{main3}}\label{app}

Under the assumptions of Theorem \ref{main3}, there exists a homogeneous space $K/L$ diffeomorphic to $M$, endowed with a $K$-invariant naturally reductive Riemannian metric $\mu_0$.  By virtue of Theorem \ref{Kost}, we can find a Lie subgroup $G$ of $K$ and an $\operatorname{Ad}(G)$-invariant, symmetric, non-degenerate bilinear form $Q$ such that properties (i) and (ii) hold.  Let $H_1,\dots,H_{N-1},H_N$ be Lie groups such that $H\subset H_1\subset \cdots \subset H_{N-1}\subset H_N:=G$, and let $\fr{h}_1,\dots,\fr{h}_{N-1},\fr{h}_N$ be their corresponding Lie algebras with $\fr{h}\subset \fr{h}_1\subset \cdots \subset \fr{h}_{N-1}\subset \fr{h}_N:=\fr{g}$.  Using the same arguments as in subsection \ref{cf}, we take into account the fact that $\left.Q\right|_{\fr{m}\times \fr{m}}$ is positive definite, the $Q$-orthogonality between $\fr{h}$ and $\fr{m}$ as well as the $\operatorname{Ad}(G)$-invariance of $Q$ to obtain a $Q$-orthogonal decomposition

\begin{equation}\label{votsala}\fr{g}=\fr{h}\oplus \fr{m}, \quad \makebox{with} \quad \operatorname{Ad}(H)\fr{m}\subseteq \fr{m}.\end{equation}

  We set $\fr{h}_0:=\fr{h}$ and we define subspaces $\fr{m}_1,\dots,\fr{m}_N$ of $\fr{m}$ via the $Q$-orthogonal decompositions
 
 \begin{equation}\label{miai}\fr{h}_{i}=\fr{h_{i-1}}\oplus\fr{m}_{N-i+1},\quad i=1,\dots,N.\end{equation}

The sums (\ref{miai}) are direct due to the fact that $\left.Q\right|_{\fr{m}_i\times \fr{m}_i}$ is positive definite for $i=1,\dots,N$.  We endow $\bigoplus_{i=1}^N{\fr{m}_i}$ with the $N$-parameter family of forms defined by

\begin{equation}\label{formss}\langle \ ,\ \rangle=\left.\lambda_1Q\right|_{\fr{m}_1\times \fr{m}_1}+\cdots + \left.\lambda_NQ\right|_{\fr{m}_N\times \fr{m}_N}, \quad \lambda_i\in \mathbb R^*.\end{equation}

 We will prove that the forms (\ref{formss}) correspond to the desired $N$-parameter family of $G$-invariant metrics $\mu(\lambda_1,\dots,\lambda_N)$ satisfying Theorem \ref{main11}.  To this end, we will firstly show the following.

\begin{lemma}\label{claim3}The subspaces $\fr{m}_i$, defined by the decompositions (\ref{miai}), satisfy the following relations.\\
 
\noindent  (i) $\fr{m}=\bigoplus_{i=1}^N{\fr{m}_i}$.\\

\noindent (ii) $\operatorname{Ad}(H_{i-1})\fr{m}_{N-i+1}\subseteq \fr{m}_{N-i+1}$,\quad for any \quad $i=1,\dots,N$.\\
 
\noindent (iii) $\operatorname{Ad}(H)\fr{m}_i\subseteq \fr{m}_i$, \quad for any \quad $i=1,\dots,N$.\\
 
\noindent (iv) $[\fr{m}_i,\fr{m}_j]\subseteq \fr{m}_i$\quad for any \quad i,j \quad with \quad $1\leq i<j\leq N$.\\
\end{lemma}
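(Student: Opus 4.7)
The plan is to exploit the $Q$-orthogonality of the decompositions (\ref{miai}) together with the $\operatorname{Ad}(G)$-invariance of $Q$ (which gives $\operatorname{ad}$-skew-symmetry against $Q$); parts (i)--(iii) then follow by bookkeeping and (iv) reduces to a single application of that skew-symmetry, after correctly lining up the indices.

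For (i), I iterate (\ref{miai}) telescopically: starting from $\fr{h}_N=\fr{g}$ and peeling off $\fr{m}_1,\fr{m}_2,\dots,\fr{m}_N$ one at a time, I obtain $\fr{g}=\fr{h}\oplus\fr{m}_N\oplus\cdots\oplus\fr{m}_1$, which combined with (\ref{votsala}) yields $\fr{m}=\bigoplus_{i=1}^N\fr{m}_i$. For (ii), I use that $H_{i-1}\subseteq H_i\subseteq G$ implies $\operatorname{Ad}(H_{i-1})$ preserves both $\fr{h}_{i-1}$ and $\fr{h}_i$ (each being the Lie algebra of a subgroup containing $H_{i-1}$) and also preserves $Q$; the $Q$-orthogonal complement of $\fr{h}_{i-1}$ inside $\fr{h}_i$, which is $\fr{m}_{N-i+1}$ by construction, is therefore $\operatorname{Ad}(H_{i-1})$-invariant. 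Part (iii) is immediate from (ii), because $H=H_0\subseteq H_{i-1}$ for every $i$, and as $i$ runs through $1,\dots,N$ the index $N-i+1$ sweeps out all of $\{1,\dots,N\}$.

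The only real work is (iv), and I expect the main obstacle to be purely combinatorial, namely keeping the indexing between the flag $\fr{h}_0\subset\cdots\subset\fr{h}_N$ and the pieces $\fr{m}_\ell$ consistent. The crucial observation is that $\fr{m}_\ell\subseteq\fr{h}_{N-\ell+1}$ for every $\ell$; hence if $i<j$ then $N-j+1\leq N-i$, so that $\fr{m}_j\subseteq\fr{h}_{N-j+1}\subseteq\fr{h}_{N-i}$, while $\fr{m}_i\subseteq\fr{h}_{N-i+1}$ is by definition $Q$-orthogonal to $\fr{h}_{N-i}$. Fix $X\in\fr{m}_i$ and $Y\in\fr{m}_j$. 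Since $\fr{h}_{N-i+1}$ is a subalgebra containing both $X$ and $Y$, the bracket $[Y,X]$ lies in $\fr{h}_{N-i+1}$, and it suffices to show it is $Q$-orthogonal to $\fr{h}_{N-i}$. For any $Z\in\fr{h}_{N-i}$, the fact that $\fr{h}_{N-i}$ is a subalgebra gives $[Y,Z]\in\fr{h}_{N-i}$, and then the $\operatorname{ad}$-skew-symmetry of $Q$ inherited from its $\operatorname{Ad}(G)$-invariance yields
\[
Q([Y,X],Z)=-Q(X,[Y,Z])=0,
\]
since $X\in\fr{m}_i$ is $Q$-orthogonal to $\fr{h}_{N-i}$. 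Hence $[Y,X]$ lies in the $Q$-orthogonal complement of $\fr{h}_{N-i}$ in $\fr{h}_{N-i+1}$, which by (\ref{miai}) is $\fr{m}_i$; this gives $[\fr{m}_i,\fr{m}_j]\subseteq\fr{m}_i$ and completes the plan.
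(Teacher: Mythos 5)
Your proof is correct and follows essentially the same route as the paper: telescoping the decompositions (\ref{miai}) for (i), using the $\operatorname{Ad}(G)$-invariance of $Q$ to show $\operatorname{Ad}(H_{i-1})$ preserves the orthogonal complement of $\fr{h}_{i-1}$ in $\fr{h}_i$ for (ii)--(iii), and exploiting the flag structure for (iv). The only (cosmetic) difference is in (iv), where the paper deduces $[\fr{m}_i,\fr{h}_{N-i}]\subseteq\fr{m}_i$ by infinitesimalizing part (ii), whereas you re-derive the same containment directly from the $\operatorname{ad}$-skew-symmetry of $Q$; both rest on the identical underlying fact.
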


\begin{proof}

By using relations (\ref{miai}) recursively, we obtain 

\begin{equation*}\fr{g}=\fr{h}_N=\fr{h}_{N-1}\oplus\fr{m}_1=\fr{h}_{N-2}\oplus \fr{m}_2\oplus \fr{m}_1=\dots =\fr{h}\oplus\fr{m}_N\oplus\cdots \oplus \fr{m}_1,\end{equation*}

\noindent which, along with decomposition (\ref{votsala}), proves part (i).  For part (ii), decompositions (\ref{miai}) imply that $\fr{m}_{N-i+1}\subseteq \fr{h}_{i}$, therefore 

\begin{equation}\label{skulo}\operatorname{Ad}(H_{i-1})\fr{m}_{N-i+1}\subseteq \operatorname{Ad}(H_{i})\fr{m}_{N-i+1}\subseteq \operatorname{Ad}(H_{i})\fr{h}_{i}\subseteq \fr{h}_i=\fr{h}_{i-1}\oplus \fr{m}_{N-i+1}.\end{equation}

On the other hand, the $\operatorname{Ad}(G)$-invariance of $Q$ implies that

\begin{equation*}Q(\operatorname{Ad}(H_{i-1})\fr{m}_{N-i+1},\fr{h}_{i-1})\subseteq Q(\fr{m}_{N-i+1},\operatorname{Ad}(H_{i-1})\fr{h}_{i-1})\subseteq Q(\fr{m}_{N-i+1},\fr{h}_{i-1})\subseteq \{0\}.\end{equation*}

Therefore $\operatorname{Ad}(H_{i-1})\fr{m}_{N-i+1}$ is orthogonal to $\fr{h}_{i-1}$ which, along with relation (\ref{skulo}), verifies part (ii).  Part (iii) follows immediately from part (ii), after taking into account that $H\subset H_i$ for any $i=1,\dots,N$.  Finally, if $i<j$, the inclusions $\fr{m}_j\subset\fr{h}_{N-j+1}\subset \fr{h}_{N-i}$, along with part (ii), imply that

\begin{equation*}[\fr{m}_i,\fr{m}_j]\subseteq [\fr{m}_i,\fr{h}_{N-i}]\subseteq \fr{m}_i,\end{equation*}

\noindent which concludes the proof of part (iv).\end{proof}

We proceed to conclude the proof of Theorem \ref{main3}.  Part (i) of Lemma \ref{claim3} implies that the forms (\ref{formss}) are defined on $\fr{m}$.  Since $\lambda_i\in \mathbb R^*$, the forms are non-degenerate.  Moreover, part (iii) asserts that the subspaces $\fr{m}_i$ are $\operatorname{Ad}(H)$-invariant.  Therefore, the forms (\ref{formss}) correspond to a family of $G$-invariant pseudo-Riemannian metrics $\mu=\mu(\lambda_1,\dots,\lambda_N)$ in $G/H$.  We set $\langle \ ,\ \rangle_0:=\left.Q\right|_{\fr{m}\times \fr{m}}$.  Then the $N$-parameter family of operators $A_{\mu}:\fr{m}\rightarrow \fr{m}$ defined by

\begin{equation}\label{mtc}A_{\mu}=A_{\mu}(\lambda_1,\dots,\lambda_N):=\left.\lambda_1\operatorname{Id}\right|_{\fr{m}_1}+\cdots + \left.\lambda_N\operatorname{Id}\right|_{\fr{m}_N}, \quad \lambda_i\in \mathbb R^*,\end{equation}

\noindent satisfy Equation (\ref{metrop}).  Therefore, $A_{\mu}$ is the metric operator corresponding to $\mu$, and\\
 $\fr{m}_1,\dots,\fr{m}_N$ are the eigenspaces of $A_{\mu}$, corresponding to the eigenvalues $\lambda_1,\dots,\lambda_N$.  Finally, by virtue of part (iv) of Lemma \ref{claim3}, the eigenspaces satisfy the algebraic condition \ref{integ} of Theorem \ref{main11}.  Hence the metrics $\mu(\lambda_1,\dots,\lambda_N)$ satisfy Theorem \ref{main11}, thus concluding the proof of Theorem \ref{main3}.\\
  In the special case where $G/H$ is a homogeneous space with $G$ compact, we may choose $Q$ to be an $\operatorname{Ad}$-invariant inner product on $\fr{g}$.  In that case the arguments in this proof apply for $G/H$.  Therefore the conclusion of Theorem \ref{main3} holds for any homogeneous space $G/H$ with $G$ compact.

\subsection{Examples of the metrics $\mu(\lambda_1,\dots ,\lambda_N)$}
\begin{example}
We consider the metrics $\mu(\lambda_1,\dots,\lambda_N)$ constructed from the operators defined in Equation (\ref{mtc}).  In the case where the Lie subgroups $H_0,H_1,\dots,H_{N-1},H_{N}$, with $H=H_0\subset H_1\subset \cdots H_{N-1}\subset H_N=G$, are closed subgroups of $G$, we can define the manifolds $H_{i}/H_{i-1}$, $i=1,\dots,N$.  Then by virtue of part (ii) of Lemma \ref{claim3}, the subspaces $\fr{m},\fr{m}_1,\dots,\fr{m}_N$ of $\fr{g}$, defined by the $Q$-orthogonal decompositions (\ref{votsala}) and (\ref{miai}), can be identified with the tangent spaces $T_{eH}(G/H), T_{eH_{N-1}}(H_N/H_{N-1}),\dots, T_{eH_0}(H_1/H_0)$ respectively.  Then part (i) of Lemma \ref{claim3} implies the decomposition

\begin{equation*}T_{eH}(G/H)=T_{eH_{0}}(H_1/H_{0})\oplus \cdots  \oplus T_{eH_{N-1}}(H_N/H_{N-1}).\end{equation*}

In this regard, the metrics $\mu(\lambda_1,\dots,\lambda_N)$ are deformations of the naturally reductive metric $\left.Q\right|_{\fr{m}\times \fr{m}}$ on $G/H$, along the manifolds $H_i/H_{i-1}$, $i=1,\dots,N$.  When $G$ is compact, well-known examples of such metrics include the \emph{Cheeger deformations} of a normal metric (the metric induced from a bi-invariant metric in $G$), along the fibers of a homogeneous fibration $K/H\rightarrow G/H\rightarrow G/K$, where $H\subset K\subset G$.  These metrics where introduced in \cite{Ch} within a more general context.
\end{example}

\begin{example}
There exist numerous examples of spaces $G/H$ such that any $G$-invariant pseudo-Riemannian metric has the form $\mu(\lambda_1,\lambda_2)$.  This is the case when $G$ is a compact and simple Lie group, $H$ is a connected closed subgroup of $G$ which is non-maximal as a Lie subgroup, and the isotropy representation $\rho:H\rightarrow \operatorname{Gl}(\fr{m})$, defined by $\rho(h)X=\operatorname{Ad}(h)(X)$, induces the decomposition of $\fr{m}$ into exactly two inequivalent and irreducible submodules $\fr{m}_1,\fr{m}_2$.  Indeed, for any metric endomorphism $A$ on $\fr{m}=\fr{m}_1\oplus \fr{m}_2$, the $\operatorname{Ad}(H)$-invariance of the eigenspaces of $A$, along with the pairwise inequivalence and the irreducibility of $\fr{m}_1,\fr{m}_2$, impose the condition $A=\left.\lambda_1\operatorname{Id}\right|_{\fr{m}_1}+\left.\lambda_2\operatorname{Id}\right|_{\fr{m}_2}$.  Moreover the non-maximality of $H$ implies that $\fr{m}_1,\fr{m}_2$ are induced from a subgroup series of the form $H\subset K\subset G$, therefore, any metric has the form $\mu(\lambda_1,\lambda_2)$.  An example is the space $G_2/U(2)$, where any metric is induced from the sequence $U(2)\subset SO(4)\subset G_2$.  We refer to \cite{Di-Ke} for the classification of the spaces $G/H$ where $G$ is a compact and simple Lie group and the isotropy representation has exactly two irreducible summands. 
\end{example}
\subsection{An interpretation of Theorem \ref{main11} for $N=2$}\label{applll}
For certain two-parameter metrics $\mu(\lambda_1,\lambda_2)$ in $G/H$, the fact that the geodesics deform to orbits of a product of two one-parameter subgroups is related to the natural reductivity of the metrics $\mu(\lambda_1,\lambda_2)$ with respect to a subgroup of $G\times G$.  In particular, let $G/H$ be a homogeneous space with $H$ connected, and $G$ compact and semisimple.  Consider a Lie subgroup series $H\subset K \subset G$ such that $K$ is connected and $H$ is normal in $K$.  Then according to the construction in subsection \ref{app}, we obtain a $Q$-orthogonal decomposition $\fr{g}=\fr{h}\oplus \fr{m}_1\oplus \fr{m}_2$ with respect to an $\operatorname{Ad}$-invariant form $Q$ in $\fr{g}$.  We may choose $Q$ to be the negative of the Killing form of $\fr{g}$.  Here $\fr{h}\oplus \fr{m}_2$ is the Lie algebra of the intermediate subgroup $K$.  Since $Q$ is $\operatorname{Ad}$-invariant and $H$ is normal in $K$, we can show that $[\fr{h},\fr{m}_2]=\{0\}$ and $[\fr{m}_2,\fr{m}_2]\subset \fr{m}_2$. \\
Ziller showed that under the aforementioned relations, the metrics $\mu=\mu(\lambda_1,\lambda)$ corresponding to the endomorphisms 

\begin{equation*}A=\left.\lambda_1\operatorname{Id}\right|_{\fr{m}_1}+\left.\lambda_2\operatorname{Id}\right|_{\fr{m}_2},\quad \lambda_1,\lambda_2>0,\end{equation*}

\noindent are $G\times K$-invariant and naturally reductive (see \cite{Zi}, Section 3).  By virtue of Proposition \ref{oneiln}, the geodesics of $(G/H,\mu)$ are orbits of single one-parameter subgroups of $G\times K$.  However, any one-parameter subgroup of $G\times K$ has the form $\phi_{X_1}(t)\phi_{X_2}(t)$, where $\phi_{X_1}(t),\phi_{X_2}(t)$ are one-parameter subgroups of $G,K\subset G$ respectively.  Hence the geodesics of $(G/H,\mu)$ have the form $\phi_{X_1}(t)\phi_{X_2}(t)H$, which in turn agrees with the result of Theorem \ref{main11} for the metrics $\mu=\mu(\lambda_1,\lambda_2)$.  

\noindent

\end{document}